\def\XXint#1#2#3{{\setbox0=\hbox{$#1{#2#3}{\int}$ }
\vcenter{\hbox{$#2#3$ }}\kern-.6\wd0}}
\newcommand{\grad}{\nabla}
\renewcommand{\div}{\grad\cdot}
\newcommand{\N}{\mathbf{N}}
\newcommand{\R}{\mathbf{R}}
\DeclareMathOperator{\id}{id}
\DeclareMathOperator{\spt}{supp}
\def\loc{{\mathrm{loc}}}
\newcommand{\D}{\ensuremath{\mathcal{D}}}
\def\opt{{\mathrm{opt}}}
\def\per{{\mathrm{per}}}
\newcommand{\eps}{\varepsilon}
\newtheorem{ex}{Example}
\newtheorem{prop}{Proposition}
\newtheorem{theorem}{Theorem}
\newtheorem{lemma}{Lemma}
\newtheorem{definition}{Definition}
\newcommand{\tacka}{\, \cdot\,}
\begin{document}

\title{A quantitative theory for the continuity equation}
\author{Christian Seis\thanks{Institut f\"ur Angewandte Mathematik, Universit\"at Bonn, Endenicher Allee 60, 53115 Bonn, Germany}}

\maketitle

\begin{abstract}
In this work, we provide stability estimates for the continuity equation with Sobolev vector fields. The results are inferred from  contraction estimates for certain logarithmic Kantorovich--Rubinstein distances. As a by-product, we 
obtain a new proof of uniqueness in the DiPerna--Lions setting. The novelty in the proof lies in the fact that it is not based on the theory of renormalized solutions.
\end{abstract}

\section{Introduction}
When $u: [0,T]\times \R^d\to \R^d$, $f: [0,T]\times \R^d\to \R$ and $\bar \rho: \R^d\to\R$ are smooth functions, the solution of the Cauchy problem for the continuity equation
\begin{equation}
\label{23}
\left\{\begin{array}{rcl}\partial_t \rho + \div\left(u\rho\right) &=& f,\\ \rho(0,\tacka) &=& \bar \rho\end{array}\right.
\end{equation}
is found by the method of characteristics: If we denote by $\phi: [0,T]\times \R^d\to \R^d$ the flow of the vector field $u$, i.e.,
\begin{equation}
\label{16}
\left\{\begin{array}{rcl}\partial_t \phi(t,x) &=&  u(t,\phi(t,x)),\\ \quad \phi(0,x)&=&x,\end{array}\right.
\end{equation}
for any $(t,x)\in [0,T]\times \R^d$, then the solution to \eqref{23} is given by the formula
\begin{equation}
\label{16c}
\rho(t,\phi(t,x)) \det \grad\phi(t,x) = \bar \rho(x) + \int_0^t f(s,\phi(s,x))\det \grad\phi(s,x)\, ds.
\end{equation}
In the non-smooth setting, solutions have to be defined in the sense of distributions. A complete theory of distributional solutions, including existence, uniqueness and stability properties,  is provided in the seminal works of DiPerna and Lions \cite{DiPernaLions89} and Ambrosio \cite{Ambrosio04}.

The approach of DiPerna, Lions and Ambrosio relies on the theory of renormalized solutions. Roughly speaking, renormalized solutions are distributional solutions to which the chain rule applies in the sense that, for every suitable $\beta\in C^1(\R)$, $\beta(\rho)$ solves the continuity equation with source $\beta'(\rho)f +(\div u)(\beta(\rho) - \rho\beta'(\rho))$ and initial datum $\beta(\bar \rho)$. Whether distributional solutions are renormalized solutions depends on the regularity of $u$. It has been proved in the original paper by  DiPerna and Lions \cite{DiPernaLions89} that this is true under the condition that $u\in L^1(W^{1,1})$ and $\div u \in L^1(L^{\infty})$. DiPerna and Lions furthermore show that the latter can not be relaxed, in the sense that there are (stationary) vector fields in $W^{1,p}$ for any $p<\infty$ whose divergence is unbounded and that do not possess this renormalization property. Likewise, the authors construct solutions to the continuity equation with  $u\in W^{s,1}$ for any $s<1$ (and $\div u=0$) that are not renormalized. In \cite{Ambrosio04}, Ambrosio extends DiPerna's and Lions's results to vector fields $u\in L^1(BV)$. A counterexample in the non-$BV$ setting is provided by Depauw \cite{Depauw03}.

The merit of renormalization theory relies on a simple proof of uniqueness and stability. For instance, if $\eta$ denotes the difference of two solutions to the Cauchy problem \eqref{23}, the choice 
$\beta(z)=z^2$ yields
\[
\partial_t \eta^2 + \div\left(u\eta^2\right) = - (\div u)\eta^2,
\]
and thus, integration in space and a Gronwall argument shows that
\[
\|\eta\|_{L^{\infty}(L^2)} \le \|\bar \eta\|_{L^2} \exp^{\frac12}\left( \|\div u\|_{L^1(L^{\infty})}\right).
\]
Thus, if the initial datum $\bar \eta$ is zero, then $\eta$ vanishes identically. For a recent review on the well-posedness theories for the continuity equation \eqref{23} and the related ordinary differential equation \eqref{16}, we refer the reader to the lecture notes \cite{AmbrosioCrippa14}.

Renormalization theory is also powerful as it applies to a fairly broad class of transport or kinetic equations, e.g.,  \cite{DiPernaLions88,DiPernaLions89b,DiPernaLions89c}.
What the theory does not provide are stability estimates and bounds on the mixing or unmixing efficiencies in terms of the regularity of the advecting vector field.
 Such estimates, however, attracted much attention recently. For instance, in  \cite{LinThiffeaultDoering11,Seis13b,IyerKiselevXu14}, the continuity equation is considered as model for  mixing  of tracer particles by a viscous fluid flow. An important question in engineering applications is how well tracers can be mixed under a constraint on the advecting velocity field. Typically, one is interested in optimal mixing rates in terms of the kinetic energy $\|u\|_{L^2}$ or, more importantly, the viscous dissipation $\|\grad u\|_{L^2}$. 
The works \cite{Seis13b,IyerKiselevXu14} provide lower bounds on the rate of  exponential  decay of the $H^{-1}$ norm by $\|\grad u\|_{L^1(L^p)}$. Optimality of these bounds is proved in \cite{AlbertiCrippaMazzucato14,YaoZlatos14}.

The goal of the present work is to establish stability estimates for continuity equations with Sobolev vector fields that allow for variations of vector field, source, and initial datum. We demonstrate the strength of these estimates by providing a new proof of uniqueness of distributional solutions. Opposed to the theory of DiPerna, Lions, and Ambrosio, our approach does not rely on   renormalized solutions. Instead, we obtain uniqueness from a contraction estimate under suitable integrability  assumptions on the solutions.

Our approach is motivated by a related work  by Crippa and De Lellis for the ordinary differential equation \eqref{16}. In \cite{CrippaDeLellis08}, the authors derive simple stability estimates for suitably generalized flows, so-called regular Lagrangian flows, in the case of Sobolev vector fields $u$. These estimates allow for a direct proof of well-posedness.  Prior to the work of Crippa and De Lellis, uniqueness and stability were obtained quite indirectly and exploited the connection to the continuity equation \eqref{23} and the transport equation
\[
\left\{\begin{array}{rcl}\partial_t \rho + u\cdot \grad \rho &=& f,\\ \rho(0,\tacka) &=& \bar \rho\end{array}\right.
\]
via the method of characteristics, cf.\ \cite{DiPernaLions89,Ambrosio04}. Crippa's and De Lellis's approach has been partially extended to the $BV$ setting later by Jabin \cite{Jabin10} and Hauray and Le Bris \cite{HaurayLeBris11}.

Focussing on the case $p>1$, Crippa and De Lellis prove that any two solutions $\phi$ and $\tilde \phi$ of \eqref{16} satisfy estimates of the type
\begin{equation}
\label{16e}
\sup_{t\in(0,T)} \int \log\left(\frac{|\phi(t,x)  - \tilde \phi(t,x)|}{\delta} +1\right) dx \lesssim \|\grad u\|_{L^1(L^p)},
\end{equation}
{\em uniformly} in $\delta>0$. That means, trajectories can only vary in a tube with diameter of order $\delta$. As $\delta\to0$, this tube shrinks to a single curve, which proves uniqueness. Crippa's and De Lellis's logarithmic estimates generalize the well-known estimate valid for flows of Lipschitz vector fields
\[
\sup_{t\in(0,T)} \sup_{x\not=y}  \log\left( \frac{|\phi(t,x) - \phi(t,y)|}{|x-y|}\right) \le \|\grad u\|_{L^1(L^{\infty})}.
\]
The latter states that trajectories diverge at most exponentially  in time and yields continuous dependence on the  initial data. It should be mentioned that the (not displayed) constant in \eqref{16e} depends on the uniform bound on $\div u$. In the Lipschitz case, this bound is redundant.

Our argument for the continuity equation is very similar. Our substitute for the quantity on the left-hand side of \eqref{16e} is a bounded variant of the Kantorovich--Rubinstein distance
\[
\D_{\delta}(\eta) : = \inf_{\pi\in\Pi(\eta_+,\eta_-)} \iint  \log\left(\frac{|x-y|}{\delta}+1\right)\, d\pi(x,y),
\]
where the $\Pi(\eta_+,\eta_-)$ is the set of all joint measures on the product space $\R^d\times \R^d$ with marginals $\eta_+ := \max\{\eta,0\}$ and $\eta_- := \max\{-\eta,0\}$,\footnote{The reader will find a proper definition of Kantorovich--Rubinstein distances in Section \ref{S2} below.} and where $\eta$ denotes the difference of two solutions of the Cauchy problem for the continuity equation \eqref{23}.
Notice that the total mass of $\eta_+$ and $\eta_-$ is the same along the evolution, so that $\Pi(\eta_+,\eta_-)$ is non-empty. We will prove that
\begin{equation}
\label{16d}
\sup_{t\in (0,T)} \D_{\delta}(\eta(t,\tacka)) \lesssim \|\grad u\|_{L^1(L^p)},
\end{equation}
{\em uniformly} in $\delta$, and thus, arguing similarly as for \eqref{16e}, upon choosing $\delta$ arbitrarily small we see that $\eta$ must be zero. In the derivation of \eqref{16d}, we will work directly with the distributional formulation of the continuity equation.

Estimates in the flavor of \eqref{16d} were derived earlier in \cite{BOS,OSS,Seis13b} in the context of demixing and mixing problems, though the proofs in there rather rely on the  Lagrangian framework \eqref{16} via \eqref{16c}. Due to the logarithmic cost function, \eqref{16d} can be considered as a contraction estimate for $\exp(\D_{\delta}(\eta))$. This was the perspective taken in \cite{Seis13b} to derive exponential lower bounds on mixing measures in terms of the generalized viscous dissipation rate $\|\grad u\|_{L^1(L^p)}$. Moreover, building up on \eqref{16d}, the author recently computed optimal convergence rates for numerical schemes (jointly with Schlichting) \cite{SchlichtingSeis16a} and of diffusive perturbations \cite{Seis16b}. Logarithmic energy-type estimates for renormalized solutions of the continuity equation where derived earlier in \cite{ColomboCrippaSpirito15}.

Before rigorously stating our main results, we specify some assumptions on the data and introduce our notion of weak solutions. We let $1\le p,q\le\infty$ be fixed with $1/p+1/q=1$ and consider vector fields $u$ in $L^1_{\loc}(\R;W^{1,p}(\R^d))$. For any locally integrable choice of initial datum $\bar \rho$ and source term $f$, the following definition of distributional solutions is reasonable:

\begin{definition}\label{D1}
A function $\rho\in L^{\infty}_{\loc}(\R;L^q(\R^d))$ is called a distributional solution of the Cauchy problem for the continuity equation \eqref{23} if 
\[
\int_0^{\infty}\int \rho\left(\partial_t \zeta + u\cdot \grad\zeta\right) + f\zeta\, dxdt  + \int \zeta(t=0,\tacka) \bar \rho\, dx=0,
\]
for all functions $\zeta\in C^{\infty}_{c}( [0,\infty)\times \R^d)$.
\end{definition}

Under suitable integrability assumptions on $\bar \rho$ and $f$, existence of distributional solutions is obtained via standard approximation techniques. Indeed, arguing as in the work of DiPerna and Lions \cite{DiPernaLions89}, it is not difficult to see that for smooth and compactly supported data, we have the estimate
\begin{equation}
\label{25a}
 \|\rho\|_{L^{\infty}(L^q)} \le \exp^{1-\frac1q}\left(\|\div u\|_{L^1(L^{\infty})}\right)\left(\|\bar\rho\|_{L^q}+ \|f\|_{L^1(L^q)}\right) .
\end{equation}
This estimate guarantees compactness (the case $p=1$ requires a bit more work, cf.\ \cite[p.\ 515]{DiPernaLions89}),   provided that the right-hand side is finite.  In the present work, we will always assume that distributional solutions exist in the sense of Definition \ref{D1}.

It is clear that \eqref{25a} carries over to renormalized solutions by choosing a smooth approximation of $\beta(z) = |z|^q$. Our ambition is to avoid the theory of renormalized solutions. Still, in our proof of uniqueness we need certain integrability assumptions on $\rho$:

\begin{theorem}[Uniqueness]
\label{T1}
Let $p,\, q \in[1,\infty]$ be given  with $1/p+1/q=1$. Suppose that $u: (0,T)\times \R^d\to\R^d$ is a measurable function with $\grad u\in L^1((0,T);L^p(\R^d))$.  Then there exists at most one distributional solution in $ L^{\infty}((0,T);L^1\cap L^q(\R^d))$ to the Cauchy problem for the continuity equation \eqref{23}.
\end{theorem}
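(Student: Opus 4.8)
The plan is to reduce uniqueness to showing that $\D_\delta(\eta(t,\cdot)) \to 0$ as $\delta \to 0$, where $\eta = \rho_1 - \rho_2$ is the difference of two distributional solutions in $L^\infty((0,T);L^1\cap L^q)$. Since $\eta$ solves the continuity equation with zero source and zero initial datum in the distributional sense, the key is the contraction estimate \eqref{16d}, namely $\sup_{t} \D_\delta(\eta(t,\cdot)) \lesssim \|\grad u\|_{L^1(L^p)}$ uniformly in $\delta$. Granting such an estimate (to be established in the body of the paper), I would argue as follows: fix $t$ and let $\pi_\delta \in \Pi(\eta_+,\eta_-)$ be (near-)optimal for $\D_\delta$. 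The logarithmic cost $\log(|x-y|/\delta + 1)$ is bounded below on the region $|x-y| \ge \sigma$ by $\log(\sigma/\delta + 1)$, which blows up as $\delta \to 0$ for any fixed $\sigma > 0$. Hence a uniform-in-$\delta$ bound on $\D_\delta$ forces $\pi_\delta(\{|x-y| \ge \sigma\}) \to 0$, i.e.\ the optimal plans concentrate on the diagonal; by a standard tightness/compactness argument the limiting plan is supported on $\{x = y\}$, which means $\eta_+ = \eta_-$ as measures, hence $\eta_+ = \eta_- = 0$ (their supports are disjoint), so $\eta(t,\cdot) = 0$. Doing this for a.e.\ $t$ gives $\rho_1 = \rho_2$.

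The steps, in order: (i) verify that $\eta = \rho_1 - \rho_2 \in L^\infty((0,T);L^1\cap L^q)$ is a distributional solution of \eqref{23} with $f \equiv 0$ and $\bar\rho \equiv 0$, and that $\eta_+(t,\cdot)$, $\eta_-(t,\cdot)$ have equal (finite) total mass for a.e.\ $t$ — this last point uses that the total integral $\int \eta(t,\cdot)\,dx$ is conserved (take $\zeta$ approximating the constant function $1$ in the weak formulation, using $\grad u \in L^1(L^p)$ and $\eta \in L^\infty(L^1)$ to justify the limit), and it is zero at $t=0$; (ii) invoke the contraction estimate \eqref{16d} for $\eta$ — this is where the integrability hypotheses $\eta \in L^\infty(L^1 \cap L^q)$ and $\grad u \in L^1(L^p)$ with $1/p + 1/q = 1$ enter, ensuring the duality pairings in the derivation of \eqref{16d} make sense; (iii) run the $\delta \to 0$ argument above to conclude $\D_\delta$ uniformly bounded $\Rightarrow$ optimal plans concentrate on the diagonal $\Rightarrow \eta \equiv 0$.

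The main obstacle is step (ii): establishing the uniform-in-$\delta$ contraction estimate \eqref{16d} directly from the distributional formulation. One cannot simply plug a test function into Definition \ref{D1}, because the natural ``test function'' built from the optimal transport plan for $\D_\delta$ — roughly, a regularized version of $\log(|x-y|/\delta+1)$ paired against $\pi$ — is neither smooth nor compactly supported, and moreover the time derivative of $\D_\delta(\eta(t,\cdot))$ must be controlled. The resolution (which I expect the paper to carry out) is to regularize $\eta$ by convolution, derive a commutator estimate à la DiPerna–Lions / Crippa–De Lellis for the mollified equation, and then exploit the key structural point that the maximal-function bound $|u(t,x) - u(t,y)| \lesssim |x-y|\,(M\grad u(t,x) + M\grad u(t,y))$ combined with $\log(|x-y|/\delta+1) \le $ (something controlled) produces, after integration against the transport plan, exactly $\|M\grad u\|_{L^1(L^p)} \lesssim \|\grad u\|_{L^1(L^p)}$ for $p>1$, with the $\log$ absorbing the logarithmic singularity of the maximal function at $p=1$ via a truncation. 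The interplay between the mollification scale and the transport scale $\delta$, and keeping all constants independent of $\delta$, is the delicate part; the divergence bound $\|\div u\|_{L^1(L^\infty)}$ enters only to control the normalization/mass terms and is where the hidden constant in \eqref{16d} lives.
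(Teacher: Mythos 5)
Your overall structure matches the paper's: set $\eta = \rho_1 - \rho_2$, note it has zero mean and solves the homogeneous equation with zero initial datum, invoke the contraction estimate (Proposition~\ref{P1}) to get $\|\D_{\delta,R}(\eta)\|_{L^\infty}\le C\psi_p(\delta)$, and deduce $\eta=0$. The difference is in the last step. You argue by compactness: a Chebyshev bound gives $\pi_\delta(\{|x-y|\ge\sigma\}) \le \D_\delta(\eta)/\log(\sigma/\delta+1)\to 0$, a weak-$*$ subsequential limit $\pi_0\in\Pi(\eta_+,\eta_-)$ is therefore concentrated on the diagonal, hence $\eta_+=\eta_-$ as measures, and mutual singularity forces both to vanish. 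This is correct; just note that for $p=1$ the bound is only $\psi_1(\delta)=o(|\log\delta|)$ rather than uniform in $\delta$, but your Chebyshev quotient still tends to zero, so the argument covers both cases. The paper instead proves the quantitative Lemma~\ref{L4}, bounding $\D_R(\eta)=\inf_\pi\iint\min\{|x-y|,R\}\,d\pi$ explicitly by $\D_{\delta,R}(\eta)$ and sending $\delta\to0$, then $\eps\to0$. Both are the same Chebyshev idea at heart, but the paper's version yields an explicit rate, which is exactly what powers the stability estimate of Theorem~\ref{T2}; your qualitative compactness argument proves uniqueness but would not give \eqref{25b}.

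Your sketch of how Proposition~\ref{P1} is established, however, misidentifies the method and in fact contradicts the paper's stated novelty. You expect a mollification of $\eta$ and a commutator estimate \emph{\`a la} DiPerna--Lions / Crippa--De Lellis. The paper deliberately avoids this: Lemma~\ref{L1} differentiates $t\mapsto\D_{\delta,R}(\eta(t,\cdot))$ directly via the Kantorovich--Rubinstein duality, using the suboptimality inequality $\D_{\delta,R}(\eta_t)-\D_{\delta,R}(\eta_{t-h})\le\int\varphi_t(\eta_t-\eta_{t-h})\,dx$ and regularizing only the Kantorovich potential $\varphi_{\opt}$, never the solution $\eta$. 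There is no commutator lemma; the maximal function enters through the explicit formula \eqref{9} for $\grad\varphi_{\opt}$ on the support of the optimal plan together with the marginal condition (Lemmas~\ref{L2}--\ref{L3}). This is also why the paper works with the bounded cost $c_{\delta,R}$ rather than the raw $\log(|x-y|/\delta+1)$: it guarantees $\varphi_{\opt}\in W^{1,\infty}$ with the bounds \eqref{12} that the differentiation argument requires.
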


We recall that this result was previously derived by DiPerna and Lions \cite[Corollary II.1]{DiPernaLions89}. Our contribution is a new quantitative proof.

In fact, DiPerna's and Lions's theory goes far beyond the above theorem. The duo establishes well-posedness in a setting where distributional solutions are not even defined \cite[Theorem II.3]{DiPernaLions89}. This is achieved by restricting the class of admissible $\beta$'s to bounded functions. For these, the renormalized equation and, in particular, the transport term $\div (u\beta(\rho))$ make sense distributionally even if $\rho$ is only integrable.

The requirement in Theorem \ref{T1} that distributional solutions belong to $L^1$  can be dropped, if, for instance, $u$ is a bounded vector field. In this case, the proof  has to be modified and a localized version of the continuity equation \eqref{23} has to be considered.
%
%
Our main result is the following stability estimate in the case $p>1$:

\begin{theorem}[Stability]
\label{T2}
Let $p\in(1,\infty]$ and $q\in [1,\infty)$ be given  with $1/p+1/q=1$.
Let $\rho_1$ and $ \rho_2$ in $ L^{\infty}((0,T);L^1\cap L^q(\R^d))$ be two solutions to the continuity corresponding to the data $(u_1,f_1,\bar \rho_1)$ and $(u_2,f_2,\bar \rho_2)$, respectively. Assume that $u_1\in L^1((0,T);W^{1,p}(\R^d))$ and that
\[
r:= \|u_1-u_2\|_{L^1(L^p)} +\|f_1-f_2\|_{L^1(L^q)} +  \|\bar \rho_1-\bar \rho_2\|_{L^q} \ll 1.
\]
Then
\[
\eta : = \rho_1-\rho_2 - (\bar \rho_1-\bar\rho_2) - \int_0^t(f_1-f_2)\, ds
\]
is bounded in $L^{\infty}((0,T);L^1\cap L^q(\R^d))$ and there exists a constant $C$ independent of $r$ such that
such that
\begin{equation}
\label{25b}
\|\eta\|_{L^{\infty}(W^{-1,1})} \le\frac{C}{|\log r|}.
\end{equation}
\end{theorem}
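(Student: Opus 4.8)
The plan is to run the same contraction scheme that yields the uniqueness Theorem~\ref{T1}, but now tracking the perturbation $\eta$ rather than the full difference of solutions. First I would derive the equation satisfied by $\eta$. Since $\rho_1-\rho_2$ solves
\[
\partial_t(\rho_1-\rho_2) + \div\big(u_1(\rho_1-\rho_2)\big) = (f_1-f_2) + \div\big((u_2-u_1)\rho_2\big),
\]
and since subtracting off $\bar\rho_1-\bar\rho_2$ and $\int_0^t(f_1-f_2)\,ds$ removes the nonzero initial datum and the source, $\eta$ is a distributional solution of a continuity equation with vector field $u_1$, zero initial datum, and source
\[
g := \div\big((u_2-u_1)\rho_2\big) - u_1\cdot\grad\Big(\bar\rho_1-\bar\rho_2+\int_0^t(f_1-f_2)\,ds\Big),
\]
which is small in $L^1(W^{-1,1})$, of order $r$. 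The $L^\infty(L^1\cap L^q)$ bound on $\eta$ follows from the a priori estimate \eqref{25a} (applied to $\rho_1-\rho_2$ with source $f_1-f_2 + \div((u_2-u_1)\rho_2)$, the latter controlled using $\|\rho_2\|_{L^\infty(L^q)}$ and $\|u_1-u_2\|_{L^1(L^p)}$) together with the obvious bounds on the subtracted terms.

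Next I would apply the contraction estimate \eqref{16d} — the core a~priori estimate behind Theorem~\ref{T1} — to the solution $\eta$ of this perturbed problem. The point is that the presence of a source $g$ and the splitting into $\eta_+,\eta_-$ modifies \eqref{16d} by an additive term measuring the size of the source in the weak norm: one expects an estimate of the form
\[
\sup_{t\in(0,T)}\D_\delta(\eta(t,\tacka)) \lesssim \|\grad u_1\|_{L^1(L^p)}\big(\|\eta\|_{L^\infty(L^1\cap L^q)} + \text{mass created}\big) + \frac{1}{\log(1/\delta)}\cdot\frac{r}{\delta},
\]
or more precisely, the source contributes to $\D_\delta$ a term like $\int_0^T\|g(s)\|_{W^{-1,1}}\,ds$ divided by something comparable to $\delta$, because displacing mass by the cost function $\log(|x-y|/\delta+1)$ over distances $\sim\delta$ costs $O(1)$ per unit mass. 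The honest bookkeeping here is the main technical content; I expect the clean statement to be that for every $\delta>0$,
\[
\sup_{t\in(0,T)}\D_\delta(\eta(t,\tacka)) \le C\,\|\grad u_1\|_{L^1(L^p)} + \frac{C\,r}{\delta}
\]
for a constant $C$ depending on the $L^\infty(L^1\cap L^q)$ norms and on $\|\div u_1\|_{L^1(L^\infty)}$ as in \eqref{16d}.

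Finally I would convert this into the $W^{-1,1}$ bound \eqref{25b} by a Chebyshev/marginal argument, the same device used to pass from \eqref{16e} to uniqueness. Given the optimal plan $\pi$, the mass of $\eta_+$ (equal to that of $\eta_-$) that is transported a distance $\ge 1$ is at most $\D_\delta(\eta(t,\tacka))/\log(1/\delta+1)$; the remaining mass is moved less than distance $1$, and any coupling through distance $<1$ controls $\|\eta(t,\tacka)\|_{W^{-1,1}}$ by roughly that distance times the total mass plus the long-range mass times a bounded quantity (using the $L^1$ bound on $\eta$ to handle the tails). Optimizing over $\delta$ is where $r$ enters: taking $\delta$ small makes the $Cr/\delta$ term blow up, taking $\delta$ large degrades $\log(1/\delta+1)$, and the balance $\delta \sim$ (something like $\sqrt r$ or $r^{1-}$) produces a bound of order $1/|\log r|$. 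The hardest part, and the one I would spend the most care on, is the source-term contribution to the contraction estimate for $\D_\delta$: one must show that a source small in $L^1(W^{-1,1})$ perturbs the logarithmic Kantorovich--Rubinstein distance by at most $O(r/\delta)$, which requires controlling how the weak norm of $g$ interacts with transport at the length scale $\delta$ — essentially a regularization-by-convolution estimate at scale $\delta$, exactly as in the proof of \eqref{16d} itself.
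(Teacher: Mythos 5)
Your proposal follows essentially the same route as the paper: establish a logarithmic Kantorovich--Rubinstein contraction estimate for $\eta$ with a source contribution of order $r/\delta$ (this is precisely Proposition~\ref{P1}, where the extra flux terms are paired against $\grad\varphi_{\opt}$, bounded by $1/\delta$), and then convert $\D_{\delta,R}(\eta)$ into a $W^{-1,1}$ bound via a Chebyshev argument on the optimal plan with an optimization of parameters (this is Lemma~\ref{L4}, applied with $R=1$, $\delta=r$, $\eps=|\log\sqrt r|^{-1}$). The only slip is cosmetic: in your expression for $g$, the correct additional source is $-\div\bigl(u_1(\bar\rho_1-\bar\rho_2+\int_0^t(f_1-f_2)\,ds)\bigr)$ rather than $-u_1\cdot\grad(\cdots)$, which makes no difference to the argument.
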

In the statement, we have used the notation $W^{-1,1}$ for the dual space of $W^{1,\infty}$, endowed with the norm
\[
\|\eta\|_{W^{-1.1}} = \sup\left\{ \int \eta \varphi\, dx:\: \|\varphi\|_{W^{1,\infty}}\le 1\right\}.
\]

Estimates analogous to \eqref{25b} in the Lagrangian setting \eqref{16} can be found in \cite[Theorem 2.9]{CrippaDeLellis08}.

We remark that DiPerna and Lions \cite{DiPernaLions89} prove $L^1_{\loc}$ stability for the continuity equation by the use of renormalized solutions. The new contribution here is the stability estimate.

The following example by De Lellis, Gwiazda and \'Swierczewska-Gwiazda shows that one {\em cannot} expect strong stability estimates for the continuity equation.

\begin{ex}[\cite{DeLellisGwiazdaSwierczewska16}]\label{example}
For $k\in\N$, consider the one-dimensional vector fields $u_k(x) = k^{-1} \sin(kx)$. We denote by $\rho_k$ the solutions of the homogeneous continuity equation
\[
\partial_t\rho_k + \partial_x(u_k\rho_k) = 0,\qquad \rho_k(0)\equiv1.
\] 
It is clear that $u_k\to 0$ uniformly, while $\partial_x u_k\rightharpoonup 0$ in $L^1_{\loc}$. If we denote by $\phi_k$ the associated flow, then both $\phi_k$ and $ \phi_k^{-1}$ converge uniformly to the identity on $\R$, but $\partial_x \phi_k$ does {\em not} converge strongly in $L^1_{\loc}$. Using \eqref{16c} we then compute that on any bounded interval $I$ in $\R$ and for any $T>0$, it holds
\[
\int_0^T \int_I |\rho_k(t,x)  -1|\, dx = \int_0^T\int_{\phi_k^{-1}(t,I)} | 1- \partial_x \phi_k(t,x)|\, dx.
\]
In particular, 
\[
\rho_k\,\, \not\!\!\longrightarrow 1\quad\mbox{in } L^1_{\loc}.
\]
\end{ex}

Our method fails in the case $p=1$ for the same reason why Crippa's and De Lellis's approach for \eqref{16} fails: It is not clear if estimate \eqref{16d} holds true if $u\in L^1(W^{1,1})$ or $u\in L^1(BV)$. It is not difficult to show that \eqref{16d} holds in these cases if one allows for an error of the order $|\log \delta|$, e.g.,
\[
\sup_{t\in(0,T)} \D_{\delta}(\eta(t,\tacka)) \lesssim |\log\delta| \|\grad u\|_{L^1(L^1)}.
\]
This estimate is critical because $\D_{\delta}(\eta)\sim  |\log\delta|$ if $\eta$ varies on a scale of order $1$. In the Sobolev case, following an idea of Jabin \cite{Jabin10}, this error can be lowered to order $o(|\log\delta|)$ but one looses the explicit dependence on $\grad u$. As a consequence, such a bound is still enough for proving uniqueness, but too weak to construct explicit stability estimates. The $p=1$ case of \eqref{16d} is related to an open conjecture of Bressan \cite{Bressan03}.

The mathematical reason why our proof (and the one of Crippa and De Lellis) does not extend to the case $p=1$ in a clean way is connected to failing Calder\'on--Zygmund theory, more precisely, to the fact that the maximal function operator fails to be continuous from $L^1$ to $L^1$. We refer to the discussion on page \pageref{page1} and Example \ref{E1} on page \pageref{E1} for more details.

To the best of our knowledge, in this paper, it is for the first time that uniqueness for the continuity equation with non-smooth vector fields is obtained without the use of renormalization theory, and, more importantly, that explicit stability estimates are derived. Optimal transportation tools were previously used for nonlinear continuity equations, e.g.,  for the Vlasov--Poisson system \cite{Loeper06a} and the 2D Euler vorticity equation \cite{Loeper06b}.

The paper is organized as follows. In the following section, we introduce and discuss Kantorovich--Rubinstein distances. Section \ref{S4} contains the proofs.

\section{Kantorovich--Rubinstein distances}\label{S2}

The goal of this section is to give an overview on some basic results in the theory of optimal transportation with metric cost functions. We choose a presentation that is tailored to our needs, and in particular, we will focus on a rather ``smooth'' setting. For possible generalizations as well as a comprehensive introduction to the topic, we refer to Villani's monograph \cite{Villani03} and the references therein.

Given two (nonnegative) distributions $\eta_1$ and $\eta_2$ on $\R^d$ with same total mass,
\[
\int \eta_1\, dx = \int\eta_2\, dx,
\]
we consider the set of all joint measures $\Pi(\eta_1,\eta_2)$. That is, $\pi\in \Pi(\eta_1,\eta_2)$ is characterized by the requirement that
\begin{equation}\label{1}
\pi[A\times \R^d] = \int_A\eta_1\, dx,\qquad \pi[\R^d\times A]  = \int_A\eta_2\,dx,
\end{equation}
for all measurable sets $A$ in $\R^d$. In the theory of optimal transportation, $d\pi(x,y)$ measures the amount of mass that is transferred from the producer at $x$ to the consumer at $y$. Accordingly, we refer to $\pi$ as a {\em transport plan}. Condition \eqref{1} can  equivalently be stated as
\begin{equation}
\label{2}
\iint \zeta_1(x) +\zeta_2(y)\, d\pi(x,y) = \int \zeta_1\eta_1\, dx + \int \zeta_2\eta_2\, dy,
\end{equation}
for all $\zeta_1 \in L^1(\eta_1\, dx)$ and $\zeta_2 \in L^1(\eta_2\, dx)$. Supposing that transport of mass over a distance $z$ is described by a continuous increasing cost function $c(z)$, the problem of optimal transportation consists of finding a transport plan that minimizes the total transportation cost. The minimal transportation cost is thus
\begin{equation}
\label{4}
\D_c(\eta_1,\eta_2): = \inf_{\pi\in \Pi(\eta_1,\eta_2)} \iint c(|x-y|)\, d\pi(x,y).
\end{equation}
We will always assume that $c$ is bounded on $\R_+$.
Following the direct method of calculus of variations, it is then not hard to see that the infimum is actually attained. 

In  this paper, we will study optimal transportation with  {\em concave cost functions} $c: [0,\infty)\to [0,\infty)$ and $c(0)=0$. These functions induce a metric $d(x,y) = c(|x-y|)$ on $\R^d$, and the optimal transport problem has the dual formulation
\begin{equation}
\label{3}
\D_c(\eta_1,\eta_2) = \sup_{\varphi}\left\{ \int\varphi(\eta_1-\eta_2)\, dx:\: |\varphi(x)-\varphi(y)|\le d(x,y)\right\}.
\end{equation}
The latter is known as the Kantorovich--Rubinstein theorem (cf.\ \cite[Theorem 1.14]{Villani03}). Hence, for concave cost functions, the optimal transportation problem only depends on the difference $\eta_1-\eta_2$, and thus, in this case, the problem generalizes to distributions that are not necessarily nonnegative. We conveniently write
\[
\D_c(\eta)   := \D_c(\eta,0) := \D_c(\eta_+,\eta_-)
\]
for any function $\eta$ in $L^1(\R^d)$ with zero ``mean'',
\[
\int\eta\, dx=0,
\]
and where $\eta_+$ and $\eta_-$ denote the positive and negative part of $\eta$, respectively. Furthermore, because $d(x,y)$ is a bounded metric on $\R^d$, the minimal transportation cost $\D_c(\eta_1,\eta_2)$ defines a metric on $L^1(\R^d)$, which goes by different names depending on the mathematical community. We follow Villani in \cite{Villani03} and refer to $\D_c(\eta_1, \eta_2)$ as {\em Kantorovich--Rubinstein distance}.

The dual problem \eqref{3} admits a maximizer $\varphi_{\opt}$ that saturates the Lipschitz constraint in the form
\[
\varphi_{\opt}(x) - \varphi_{\opt}(y) = d(x,y)\quad\mbox{for $d\pi_{\opt}$-almost all }(x,y),
\]
where $\pi_{\opt}$ is a minimizer of $\D_c(\eta_+,\eta_-)$ in the primal formulation \eqref{4}. We will often refer to $\varphi_{\opt}$ as a {\em Kantorovich--Rubinstein potential}. It is clear that $\varphi_{\opt}$ is non-unique: we can always add a constant to $\varphi_{\opt}$ without changing the expectation with respect to $\eta$. We can therefore always choose $\varphi_{\opt}$ as a bounded $d$-Lipschitz function.

If the cost function $c$ is {\em strictly concave}, the above identity and the Lipschitz constraint in turn imply that Kantorovich--Rubinstein potentials are weakly differentiable in $\spt(\eta)$ with
\begin{equation}
\label{5}
\grad\varphi_{\opt}(x) = \grad\varphi_{\opt}(y) ,\qquad\grad\varphi_{\opt} (x) = \grad_x d(x,y) = c'(|x-y|)\frac{x-y}{|x-y|},
\end{equation}
for $d\pi_{\opt}$-almost all $(x,y)$. Notice that $\pi_{\opt}$ is supported away from the diagonal and thus $x\not=y$ in \eqref{5}.\footnote{The second formula in \eqref{5} can be verified as follows: For $d\pi_{\opt}$-almost all $(x,y)$ it holds that $x\not=y$, and thus, for any $z\in \R^d$ and $s\in \R\setminus\{0\}$ small,
\[
\varphi_{\opt}(x+sz)  -\varphi_{\opt}(x)\le d(x+sz,y) - d(x,y).
\]
Hence, dividing by $s$ be find \eqref{5} as $s\to 0$.}

\section{Proofs}\label{S4}

Throughout this section, $\eta$ will always be an integrable, mean-zero distributional solution to a continuity equation.
In order to measure the distance of such a solution $\eta$ to the trivial solution, we design a Kantorovich--Rubinstein distance with {\em bounded logarithmic cost}, more precisely, for any positive $\delta$ and $R$, we set
\[
c_{\delta,R}(z) = \begin{cases} \log\left(\frac{z}{\delta}+1\right) & \mbox{for }z\le R,\\ 
\log\left(\frac{R}{\delta}+1\right) +  \frac{R}{R+\delta} \left(1-\frac{R}{z}\right)&\mbox{for }z\ge R.
\end{cases}
\]
By construction, $c_{\delta,R}$ is a continuously differentiable, bounded, and strictly concave function on $\R_+$. 
We shall use the abbreviation
\[
\D_{\delta,R}(\eta) := \D_{c_{\delta,R}}(\eta).
\]
At any time $t$, let $\pi_{\opt}(t)$ and $\varphi_{\opt}(t,\tacka)$ denote the optimal transport plan and the Kantorovich--Rubinstein potential corresponding to $\D_{\delta,R}(\eta(t,\tacka))$. The Lipschitz condition in \eqref{3} can be rephrased as
\[
|\varphi_{\opt}(t,x) - \varphi_{\opt}(t,y)|\le \begin{cases} \log\left(\frac{|x-y|}{\delta}+1\right)& \mbox{for }|x-y|\le R,\\ 
\log\left(\frac{R}{\delta}+1\right) +  \frac{R}{R+\delta} \left(1-\frac{R}{|x-y|}\right)&\mbox{for }|x-y|\ge R.
\end{cases}.
\]
In particular, upon adding a constant to $\varphi_{\opt}$, we can always assume that $\|\varphi_{\opt}\|_{L^{\infty}}\le \log(R\delta^{-1} +1) + R(R+\delta)^{-1}$. Moreover,
because $c_{\delta,R}$ is Lipschitz (on $\R_+$), so is $\varphi_{\opt}(t,\tacka)$ with $
\|\grad\varphi_{\opt}\|_{L^{\infty}} \le \delta^{-1}$.
For further reference, $\varphi_{\opt} \in L^{\infty}(\R; W^{1,\infty}(\R^d))$ with
\begin{equation}
\label{12}
 \|\varphi_{\opt}\|_{L^{\infty}(L^{\infty})} \le \log\left(\frac{R}{\delta}+1\right)  + \frac{R}{R+\delta},\qquad \|\grad\varphi_{\opt}\|_{L^{\infty}(L^{\infty})} \le \frac1{\delta}.
\end{equation}
We also infer from \eqref{5} that $\grad\varphi_{\opt}(t,x) =\grad\varphi_{\opt}(t,y)$ with
\begin{equation}
\label{9}
\grad\varphi_{\opt}(t,x) =\begin{cases} \frac1{\delta+|x-y|} \frac{x-y}{|x-y|} &\mbox{if }|x-y|\le R,\vspace{.5em}\\  \frac{R^2}{R+\delta} \frac1{|x-y|^2}\frac{x-y}{|x-y|}&\mbox{if }|x-y|\ge R,\end{cases}
\end{equation}
for $d\pi_{\opt}(t)$-almost all $(x,y)$.

The heart of this paper is the following stability estimate for the Kantorovich--Rubinstein distance: 
\begin{prop}
\label{P1}
Let $1\le p,q\le \infty$ be given with $1/p+1/q=1$. For $u_1, u_2\in L^1((0,T); W^{1,p}(\R^d))$, $f_1, f_2\in L^1 (0,T);L^1\cap L^q(\R^d))$ and $\bar \rho_1, \bar\rho_2\in L^1 \cap L^q(\R^d)$, let $\rho_1$ and $\rho_2$ be corresponding solutions to the continuity equation \eqref{23}.
Define
\[
\eta: = \rho_1-\rho_2 - \left(\bar \rho_1-\bar\rho_2\right) -\int_0^t \left(f_1-f_2\right)\, ds.
\]
Then there exist   positive constants  $C_1$  and $C_2$ which are independent of $\delta$ and $R$ 
such that
\begin{equation}
\label{13}
\|\D_{\delta,R}(\eta)\|_{L^{\infty}}
\le C_1 \psi_p(\delta) + \frac{C_2}{\delta}\left(\|\bar\rho_1 - \bar\rho_2\|_{L^q} + \|u_1-u_2\|_{L^1(L^p)}  + \|f_1-f_2\|_{L^1(L^q)}\right),
\end{equation}
for any positive $\delta$ and $R$, where $\psi_p=1$ if $p>1$ and otherwise, $\psi_1$ is a continuous function on $\R_+$ with $\psi_1(\delta)/|\log\delta|\to0$ as $\delta\to0$.
\end{prop}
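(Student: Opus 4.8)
The plan is to establish \eqref{13} by differentiating $t\mapsto\D_{\delta,R}(\eta(t,\tacka))$ in time and exploiting the pointwise structure \eqref{5}--\eqref{9} of the optimal Kantorovich--Rubinstein potential. First I would recast $\eta$ as a solution of a single continuity equation: setting $g:=(\bar\rho_1-\bar\rho_2)+\int_0^t(f_1-f_2)\,ds$, so that $\rho_1-\rho_2=\eta+g$, $\partial_t g=f_1-f_2$ and $g(0,\tacka)=\bar\rho_1-\bar\rho_2$, one subtracts the weak formulations (Definition \ref{D1}) for $\rho_1$ and $\rho_2$ and integrates the $g$-contribution by parts in time; the source and initial-datum terms cancel and one is left with
\[
\int_0^T\!\!\int \eta\,\partial_t\zeta + (u_1\rho_1-u_2\rho_2)\cdot\grad\zeta\;dx\,dt=0\qquad\text{for all }\zeta\in C^\infty_c([0,T)\times\R^d),
\]
i.e.\ $\eta$ solves $\partial_t\eta+\div(u_1\rho_1-u_2\rho_2)=0$ distributionally with vanishing initial datum. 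Since $u_i\in L^1(L^p)$ and $\rho_i\in L^\infty_\loc(L^q)$, Hölder's inequality gives $u_1\rho_1-u_2\rho_2\in L^1((0,T);L^1(\R^d))$, so $\eta\in L^\infty(L^1\cap L^q)$ and $t\mapsto\eta(t,\tacka)$ is absolutely continuous with values in $W^{-1,1}(\R^d)$, with $\eta(0,\tacka)=0$.

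Next I would show that $t\mapsto\D_{\delta,R}(\eta(t,\tacka))$ is absolutely continuous and obeys a differential inequality. Absolute continuity follows from $|\D_{\delta,R}(\eta(t))-\D_{\delta,R}(\eta(s))|\le\D_{\delta,R}(\eta(t)-\eta(s))$ together with \eqref{12}, which bound the right-hand side by a $(\delta,R)$-dependent multiple of $\|\eta(t)-\eta(s)\|_{W^{-1,1}}$. To differentiate, fix $t$: by the duality \eqref{3}, $s\mapsto\int\varphi_{\opt}(t,\tacka)\,\eta(s,\tacka)\,dx$ is absolutely continuous with derivative $\int\grad\varphi_{\opt}(t,\tacka)\cdot(u_1\rho_1-u_2\rho_2)(s,\tacka)\,dx$, it lies below $s\mapsto\D_{\delta,R}(\eta(s,\tacka))$, and it touches the latter at $s=t$; comparing difference quotients then yields, for a.e.\ $t$,
\[
\frac{d}{dt}\D_{\delta,R}(\eta(t,\tacka))\le\int\grad\varphi_{\opt}(t,\tacka)\cdot(u_1\rho_1-u_2\rho_2)(t,\tacka)\,dx.
\]
I then split $u_1\rho_1-u_2\rho_2=u_1\eta+u_1 g+(u_1-u_2)\rho_2$. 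The last two terms are bounded using $\|\grad\varphi_{\opt}\|_{L^\infty}\le\delta^{-1}$ from \eqref{12}, Hölder's inequality, and $\|g\|_{L^\infty(L^q)}\le\|\bar\rho_1-\bar\rho_2\|_{L^q}+\|f_1-f_2\|_{L^1(L^q)}$; after integration in time they contribute exactly the term $\tfrac{C_2}{\delta}\bigl(\|\bar\rho_1-\bar\rho_2\|_{L^q}+\|u_1-u_2\|_{L^1(L^p)}+\|f_1-f_2\|_{L^1(L^q)}\bigr)$ of \eqref{13}, with $C_2$ depending only on $\|u_1\|_{L^1(L^p)}$ and $\|\rho_2\|_{L^\infty(L^q)}$.

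The core is the term $\int\grad\varphi_{\opt}\cdot u_1\eta\,dx$, treated as in Crippa and De Lellis \cite{CrippaDeLellis08}. Writing $\eta=\eta_+-\eta_-$, using that $\pi_{\opt}(t)$ has marginals $\eta_\pm(t,\tacka)$, and invoking $\grad\varphi_{\opt}(t,x)=\grad\varphi_{\opt}(t,y)$ for $d\pi_{\opt}$-a.e.\ $(x,y)$ from \eqref{5},
\[
\int\grad\varphi_{\opt}\cdot u_1\eta\,dx=\iint\grad\varphi_{\opt}(t,x)\cdot\bigl(u_1(t,x)-u_1(t,y)\bigr)\,d\pi_{\opt}(x,y).
\]
By \eqref{9}, $|\grad\varphi_{\opt}(t,x)|\le(\delta+|x-y|)^{-1}$ for $d\pi_{\opt}$-a.e.\ $(x,y)$ (checking both regimes $|x-y|\le R$ and $|x-y|\ge R$), while $W^{1,1}_\loc$ functions satisfy the pointwise difference-quotient bound $|u_1(x)-u_1(y)|\le C_d|x-y|\bigl(M|\grad u_1|(x)+M|\grad u_1|(y)\bigr)$, where $M$ is the Hardy--Littlewood maximal operator; hence the integrand above is at most $C_d\bigl(M|\grad u_1|(x)+M|\grad u_1|(y)\bigr)$ and, since $\eta_\pm$ are the marginals of $\pi_{\opt}$,
\[
\Bigl|\int\grad\varphi_{\opt}\cdot u_1\eta\,dx\Bigr|\le C_d\int M|\grad u_1|\,|\eta|\,dx\le C_d\,\bigl\|M|\grad u_1|\bigr\|_{L^p}\,\|\eta\|_{L^q}.
\]
For $1<p\le\infty$ the maximal operator is bounded on $L^p$, so this is $\le C_{d,p}\|\grad u_1(t,\tacka)\|_{L^p}\|\eta\|_{L^\infty(L^q)}$; integrating in $t$ and using $\D_{\delta,R}(\eta(0,\tacka))=0$ gives \eqref{13} with $\psi_p\equiv1$ and $C_1\le C_{d,p}\|\grad u_1\|_{L^1(L^p)}\|\eta\|_{L^\infty(L^q)}$, uniformly in $\delta$ and $R$.

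The main obstacle is the case $p=1$, where $M$ is not bounded from $L^1$ to $L^1$: the last step breaks down, and no bound of the explicit form $C_1\|\grad u_1\|_{L^1(L^1)}$ can be expected (this is related to an open conjecture of Bressan \cite{Bressan03}). Following an idea of Jabin \cite{Jabin10}, one instead splits $\pi_{\opt}$ at a suitable scale, estimates the long-range part crudely and the short-range part by a truncated maximal function together with a logarithmic weak-type bound; this still yields \eqref{13}, but with the stated continuous $\psi_1$ satisfying $\psi_1(\delta)/|\log\delta|\to0$ as $\delta\to0$ in place of the explicit constant, and without quantitative dependence on $\grad u_1$. A further technical point, present already for $p>1$, is the rigorous justification --- via mollification and a Lebesgue-point argument on the diagonal $s=t$ --- of the differential inequality for $t\mapsto\D_{\delta,R}(\eta(t,\tacka))$ directly from the distributional formulation.
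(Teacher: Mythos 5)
Your argument is correct and mirrors the paper's: you rewrite $\eta$ as a solution of the homogeneous continuity equation with flux $u_1\rho_1-u_2\rho_2 = u_1\eta + u_1 g + (u_1-u_2)\rho_2$, differentiate $t\mapsto\D_{\delta,R}(\eta(t,\tacka))$ in time, estimate the $u_1\eta$ contribution via the potential structure \eqref{5}--\eqref{9} and the maximal-function Morrey bound (invoking Jabin's device for $p=1$), and bound the remaining terms by $\|\grad\varphi_{\opt}\|_{L^\infty}\le\delta^{-1}$. The only real divergence is technical: the paper's Lemma~\ref{L1} establishes the time derivative as a distributional \emph{equality} by integrating against $\psi\in C^\infty_c(0,T)$ and using the weak formulation of the PDE, whereas you obtain a one-sided differential inequality via an envelope/Lebesgue-point argument — both suffice for the estimate, and you correctly flagged this justification as the step requiring care.
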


The proof of this proposition requires some preparation. We first compute the temporal rate of change of the Kantorovich--Rubinstein distance for solutions of the continuity equation. 

\begin{lemma}\label{L1}
Let  $j$ be a vector field in $L^1((0,T);L^1( \R^d))$ and $\eta$ be a mean-zero function in $L^1((0,T);L^1(\R^d))$ that satisfy the continuity equation 
\[
\partial_t\eta +\div j = 0
\]
distributionally in $(0,T)\times \R^d$.
Let $\varphi_{\opt}$ be a Kantorovich--Rubinstein potential corresponding to $\D_{\delta,R}(\eta)$ for some $\delta>0$ and $R>0$. Then $t\mapsto \D_{\delta,R}(\eta(t,\tacka))$ is weakly differentiable with
\begin{equation}
\label{7}
\frac{d}{dt} \D_{\delta,R}(\eta) = \int j \cdot \grad\varphi_{\opt} \, dx .
\end{equation}
\end{lemma}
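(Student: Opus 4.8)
\textbf{Proof strategy for Lemma \ref{L1}.}

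The plan is to differentiate the dual representation \eqref{3} of $\D_{\delta,R}(\eta(t,\cdot))$ in time, exploiting that the optimal potential $\varphi_{\opt}(t,\cdot)$ is an admissible competitor for every time, so that the derivative only sees the ``explicit'' time dependence through $\eta$. Concretely, fix a time $t_0$ and let $\varphi_0 := \varphi_{\opt}(t_0,\cdot)$ be the corresponding Kantorovich--Rubinstein potential, chosen bounded and $\delta^{-1}$-Lipschitz as in \eqref{12}. For $t$ near $t_0$, since $\varphi_0$ satisfies the Lipschitz constraint $|\varphi_0(x)-\varphi_0(y)|\le c_{\delta,R}(|x-y|)$, it is admissible in \eqref{3} at time $t$, hence $\D_{\delta,R}(\eta(t,\cdot)) \ge \int \varphi_0\,\eta(t,\cdot)\,dx$, with equality at $t=t_0$. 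This one-sided bound, together with the analogous bound using the optimal potential at time $t$, will pin down the derivative: roughly, $\frac{d}{dt}\D_{\delta,R}(\eta) = \frac{d}{dt}\int \varphi_{\opt}(t,\cdot)\,\eta(t,\cdot)\,dx$ where the term coming from $\partial_t \varphi_{\opt}$ drops out by optimality (an envelope-theorem argument), leaving $\int \varphi_{\opt}\,\partial_t\eta\,dx$.

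Next I would make this rigorous in the distributional sense. Since $\eta$ solves $\partial_t \eta + \div j = 0$ distributionally and $\eta \in L^1((0,T);L^1)$, $j\in L^1((0,T);L^1)$, I want to test the equation against $\varphi_{\opt}$. The issue is that $\varphi_{\opt}(t,\cdot)$ depends on $t$ and need not be smooth, so I cannot directly plug it into Definition \ref{D1}; instead I would proceed by a weak/integrated formulation: show that for any $0\le s \le t \le T$,
\[
\D_{\delta,R}(\eta(t,\cdot)) - \D_{\delta,R}(\eta(s,\cdot)) = \int_s^t \int j(\tau,\cdot)\cdot\grad\varphi_{\opt}(\tau,\cdot)\,dx\,d\tau.
\]
To get ``$\le$'', use that $\varphi_{\opt}(t,\cdot)$ is admissible at time $t$ but test the continuity equation over $[s,t]$ with a mollification in time of $\varphi_{\opt}(t,\cdot)$ (a function frozen at the right endpoint, mollified and cut off appropriately so it lies in $C^\infty_c$); the transport term produces $\int_s^t\int j\cdot\grad(\text{mollified }\varphi_{\opt}(t,\cdot))$, and the boundary terms give $\int\varphi_{\opt}(t,\cdot)\eta(t,\cdot) - \int\varphi_{\opt}(t,\cdot)\eta(s,\cdot) \ge \D_{\delta,R}(\eta(t,\cdot)) - \D_{\delta,R}(\eta(s,\cdot))$, using optimality at $t$ and admissibility at $s$. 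Symmetrically, freezing at the left endpoint $s$ gives ``$\ge$''. Passing to the mollification limit is justified by the uniform bounds \eqref{12} on $\|\varphi_{\opt}\|_{W^{1,\infty}}$ together with $j\in L^1(L^1)$ and dominated convergence; measurability in $\tau$ of $\tau\mapsto \int j\cdot\grad\varphi_{\opt}$ follows from measurable selection of optimal potentials (or, more simply, from the fact that the left-hand side is a difference of the Lipschitz-in-$t$ function $\D_{\delta,R}(\eta(\cdot))$, which is absolutely continuous because $\eta\in L^1(L^1)$ feeds continuously into the bounded cost, and a.e.\ differentiability then transfers the formula to the integrand). Once the integrated identity holds for all $s,t$, weak differentiability with \eqref{7} is immediate.

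The main obstacle is the handling of the $t$-dependence of the nonsmooth potential $\varphi_{\opt}(t,\cdot)$: one must argue that its time derivative contributes nothing, which is an envelope/first-order-optimality phenomenon, and one must do so while only having distributional regularity for $\eta$ and $j$. I expect the cleanest route is to avoid differentiating $\varphi_{\opt}$ in time altogether by working with the two-sided ``frozen endpoint'' comparison described above, so that at each step the test function is time-independent (after mollification in space) and Definition \ref{D1}-type testing applies directly; the uniform Lipschitz bound $\|\grad\varphi_{\opt}\|_{L^\infty(L^\infty)}\le \delta^{-1}$ from \eqref{12} then controls all error terms. A secondary technical point is continuity in time of $t\mapsto \eta(t,\cdot)$ in a weak topology, which follows from the continuity equation itself (the distributional time derivative $\div j$ lies in $L^1((0,T);W^{-1,1}_{\loc})$), and suffices to make the boundary terms well-defined and to conclude the pointwise-in-$t$ statement from the integrated one.
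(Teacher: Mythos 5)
Your proposal is essentially the paper's argument: compare $\D_{\delta,R}$ at nearby times via the optimal potential at one of them (optimality gives equality there, admissibility gives the one-sided bound at the other), test the distributional continuity equation against a spatially mollified version of that potential, and pass to the limit. The paper executes the step you leave slightly vague---your frozen-endpoint comparison over $[s,t]$ produces $\grad\varphi_{\opt}(t,\cdot)$ rather than $\grad\varphi_{\opt}(\tau,\cdot)$ in the time integral, forcing the a.e.\ differentiability detour you acknowledge at the end---more cleanly by integrating the discrete increment against a nonnegative $\psi\in C_c^{\infty}(0,T)$ and identifying $\psi_t\varphi_t^{\eps}-\psi_{t+h}\varphi_{t+h}^{\eps}$ with $\partial_t\int_{t+h}^{t}\psi_s\varphi_s^{\eps}\,ds$, which keeps $\grad\varphi_s^{\eps}$ evaluated at the correct running time before dividing by $h$.
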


\begin{proof} 

We first notice that it is enough to show that \eqref{7} holds in the sense of distributions. Indeed, the right-hand side of \eqref{7} is bounded by
$\|\grad \varphi_{\opt}\|_{L^{\infty}}\|j\|_{L^1} $,
which is  integrable in time. By decomposing testfunctions into positive and negative parts and a standard approximation procedure, it is furthermore sufficient to prove
\begin{equation}
\label{8}
\int \frac{d\psi}{dt} \D_{\delta,R}(\eta) \,dt+ \iint  \psi  j\cdot \grad\varphi_{\opt} \, dxdt=0,
\end{equation}
for all {\em nonnegative} testfunctions $\psi\in C_c^{\infty}(0,T)$.

For notational convenience, for any time $t\in\R$, we will denote by $\varphi_t$ the  potential corresponding to the Kantorovich--Rubinstein distance $\D_{\delta,R}(\eta_t)$, i.e., $\varphi_t = \varphi_{\opt}(t,\tacka)$, where accordingly $\eta_t = \eta(t,\tacka)$. The functions $j_t$,  and $\psi_t$ are analogously defined.

By optimality in \eqref{3}, for any $h\in\R$ it holds that
\[
\D_{\delta,R}(\eta_t) - \D_{\delta,R}(\eta_{t-h}) \le \int \varphi_t(\eta_t-\eta_{t-h})\, dx.
\]
Integration against $\psi$ and a change of variables yield
\[
\int(\psi_t-\psi_{t+h})\D_{\delta,R}(\eta_t)\, dt \le \iint (\psi_t\varphi_t - \psi_{t+h}\varphi_{t+h})\eta_t\, dxdt.
\]
In order to appeal to the distributional formulation of the continuity equation, we shall approximate $\varphi$ by smooth functions $\varphi^{\eps}$ that are compactly supported in $\R^d$. This is possible because $\varphi$ is  uniformly bounded. We thus have
\begin{eqnarray*}
\lefteqn{\int (\psi_t - \psi_{t+h})\D_{\delta,R}(\eta_t)\, dt}\\
 &\le& \iint \frac{\partial}{\partial t}\left( \int_{t+h}^t \psi_s \varphi_s^{\eps}\, ds\right)\eta_t\, dxdt + o(1)\\
&=& - \iint j_t\cdot\left( \int_{t+h}^t \psi_s \grad \varphi_s^{\eps}\, ds\right)    dxdt + o(1)
\end{eqnarray*}
as $\eps \to0$. We recall that $\varphi_s$ is a Lipschitz function, so that we can undo the approximation. We may thus drop the $\eps$ in the above estimate. Dividing by $h$ and using Lebesgue's differentiation and dominated convergence theorems, we deduce the statement in \eqref{8} as $h\to0$.
\end{proof}

Using the marginal condition \eqref{2} and the calculation \eqref{5}, we can estimate the rate of change of $\D_{\delta,R}(\eta)$ in \eqref{7}.

\begin{lemma}\label{L2}
Let $1\le p,q\le \infty$ be given with $1/p+1/q=1$. 
Let $\eta \in L^1 \cap L^q(\R^d)$ be a function with zero mean and let $\pi_{\opt}$ and $\varphi_{\opt}$ be, respectively, a Kantorovich--Rubinstein potential and the optimal transport plan corresponding to $\D_{\delta,R}(\eta) = \D_{\delta,R}(\eta_+,\eta_-)$ for some $\delta>0$. Then, for any function $u$ in $ L^p(\R^d)$,
\begin{equation}
\label{8a}
\left| \int u\cdot \grad \varphi_{\opt}\eta\, dx\right|\le \iint \frac{|u(x) - u(y)|}{\delta + |x-y|}\, d\pi_{\opt}(x,y).
\end{equation}
\end{lemma}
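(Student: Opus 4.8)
The plan is to start from the expression $\int u\cdot\grad\varphi_{\opt}\,\eta\,dx$ and rewrite it using the marginal structure of the optimal plan $\pi_{\opt}$. Since $\varphi_{\opt}$ is bounded and $\grad\varphi_{\opt}\in L^\infty$, the vector field $u\cdot\grad\varphi_{\opt}$ is an $L^p$ function, and $\eta=\eta_+-\eta_-\in L^1\cap L^q$, so the integral is finite and splits as $\int u\cdot\grad\varphi_{\opt}\,\eta_+\,dx - \int u\cdot\grad\varphi_{\opt}\,\eta_-\,dx$. Applying the marginal identity \eqref{2} with the choices $\zeta_1(x)=u(x)\cdot\grad\varphi_{\opt}(x)$ and $\zeta_2(y)=-u(y)\cdot\grad\varphi_{\opt}(y)$ (both are in $L^1(\eta_\pm\,dx)$ because $u\in L^p$, $\eta_\pm\in L^q$, and $\grad\varphi_{\opt}$ is bounded), we obtain
\[
\int u\cdot\grad\varphi_{\opt}\,\eta\,dx = \iint \bigl(u(x)\cdot\grad\varphi_{\opt}(x) - u(y)\cdot\grad\varphi_{\opt}(y)\bigr)\,d\pi_{\opt}(x,y).
\]

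The key step is then to exploit the first identity in \eqref{5}, namely $\grad\varphi_{\opt}(x)=\grad\varphi_{\opt}(y)$ for $d\pi_{\opt}$-almost every $(x,y)$, which holds because $c_{\delta,R}$ is strictly concave (as recorded right before \eqref{9}). Using this, the integrand collapses to $\bigl(u(x)-u(y)\bigr)\cdot\grad\varphi_{\opt}(x)$ for $\pi_{\opt}$-almost all $(x,y)$, so that
\[
\int u\cdot\grad\varphi_{\opt}\,\eta\,dx = \iint \bigl(u(x)-u(y)\bigr)\cdot\grad\varphi_{\opt}(x)\,d\pi_{\opt}(x,y).
\]
Taking absolute values, applying Cauchy--Schwarz pointwise, and inserting the explicit bound $|\grad\varphi_{\opt}(x)| \le \frac{1}{\delta+|x-y|}$ valid for $d\pi_{\opt}$-almost all $(x,y)$ — which follows from \eqref{9}, since in the regime $|x-y|\le R$ one has exactly $|\grad\varphi_{\opt}(x)| = \frac{1}{\delta+|x-y|}$, while for $|x-y|\ge R$ one has $|\grad\varphi_{\opt}(x)| = \frac{R^2}{(R+\delta)|x-y|^2}\le \frac{1}{\delta+|x-y|}$ — yields exactly \eqref{8a}.

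I do not expect a serious obstacle here; this is a short computation once the structural facts from Section \ref{S2} are in hand. The one point requiring a little care is the justification that $\pi_{\opt}$ is supported away from the diagonal (so the fractions are well-defined and the formulas \eqref{5}, \eqref{9} apply), but this is already asserted in the text following \eqref{5}. A second minor point is confirming the integrability needed to apply \eqref{2}: one should note that $u\cdot\grad\varphi_{\opt}\in L^p(\eta_\pm\,dx)\subset L^1(\eta_\pm\,dx)$ by Hölder, using $\grad\varphi_{\opt}\in L^\infty$ and $\eta_\pm\in L^q$. Everything else is a direct chain of equalities followed by a single Cauchy--Schwarz estimate.
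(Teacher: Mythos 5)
Your proof is correct and follows the same route as the paper's: apply the marginal identity \eqref{2} to the antisymmetric pair $\zeta_1(x)=u(x)\cdot\grad\varphi_{\opt}(x)$, $\zeta_2(y)=-u(y)\cdot\grad\varphi_{\opt}(y)$, use $\grad\varphi_{\opt}(x)=\grad\varphi_{\opt}(y)$ on $\spt\pi_{\opt}$ together with the explicit formula \eqref{9}, and bound the result. Your verification that $\frac{R^2}{(R+\delta)|x-y|^2}\le\frac{1}{\delta+|x-y|}$ for $|x-y|\ge R$ is exactly the paper's observation that $z\mapsto z^2(z+\delta)^{-1}$ is increasing, so the two arguments coincide.
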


\begin{proof}From the marginal condition \eqref{2} on transport plans we deduce that
\[
\int u\cdot \grad\varphi_{\opt}\eta\, dx = \iint u(x)\cdot \grad \varphi_{\opt}(x) - u(y)\cdot\grad\varphi_{\opt}(y)\, d\pi_{\opt}(x,y).
\]
We now apply the formula for the gradients of the potentials, \eqref{9}, to the effect that
\begin{eqnarray*}
\int u\cdot \grad\varphi_{\opt}\eta\, dx & = &\iint_{|x-y|\le R} \frac{u(x)-u(y)}{\delta + |x-y|}\cdot \frac{x-y}{|x-y|}\, d\pi_{\opt}(x,y)\\
&&\mbox{}+ \frac{R^2}{R+\delta} \iint_{|x-y|> R} \frac{u(x)-u(y)}{|x-y|^2}\cdot \frac{x-y}{|x-y|}\, d\pi_{\opt}(x,y).
\end{eqnarray*}
Now the statement of the lemma follows easily from the fact that
$
z\mapsto z^2(z+\delta)^{-1}
$
is an increasing function.
\end{proof}

We will first estimate the integral over the difference quotient in \eqref{8a} in the case where $u$ is a Sobolev function with $\grad u\in L^p$ for some $p>1$. Our proof uses one of the central tools from Calder\'on--Zygmund theory, namely the  (Hardy--Littlewood) maximal function operator $M$. The maximal function $Mf$ of a measurable function $f: \R^d\to\R$ is given by
\[
Mf(x)  = \sup_{r>0} \frac1{|B_r(x)|}\int_{B_r(x)} |f(y)|\, dy,
\]
for $x\in \R^d$. The operator is continuous from $L^p$ to $L^p$ if $p\in(1,\infty]$, thus
\begin{equation}
\label{10}
\|M f\|_{L^p} \le C \|f\|_{L^p},
\end{equation}
and bounds difference quotients in the sense that
\begin{equation}
\label{11}
\frac{|f(x) - f(y)|}{|x-y|} \le C \left( M |\grad f|(x) + M|\grad f|(y)\right),
\end{equation}
for almost all $x,y\in \R^d$. The first estimate if proved in \cite[p.\ 5, Theorem 1]{Stein70}, and the second one is a Morrey-type estimate and for instance contained in the proof of \cite[p.\ 143, Theorem 3]{EvansGariepy92}.

The idea of using maximal functions to control the right-hand side of \eqref{8a} by $\|\grad u\|_{L^p}$ goes back to the work of Crippa and De Lellis \cite{CrippaDeLellis08}. In the context of Kantorovich--Rubinstein distances, the techniques were previously used in \cite{BOS,Seis13b}.

\begin{lemma}\label{L3}
Let $p\in (1,\infty]$ and $q\in[1,\infty)$ be given with $1/p+1/q=1$. Let $\eta$ be a function in $L^1\cap L^q(\R^d)$ with zero mean and let $\pi$ be a transport plan in $\Pi(\eta_+,\eta_-)$. Then there exists a universal constant $C>0$ such that for any integrable  function $u$ with $\grad u\in L^p(\R^d)$,
\[
\iint \frac{|u(x)-u(y)|}{|x-y|}\, d\pi(x,y) \le C \|\eta\|_{L^q} \|\grad u\|_{L^p}.
\]
\end{lemma}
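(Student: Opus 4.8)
The plan is to bound the difference quotient pointwise by maximal functions via the Morrey-type estimate \eqref{11}, and then control the resulting integral against $\pi$ by a duality/H\"older argument, exploiting that the marginals of $\pi$ are $\eta_+$ and $\eta_-$. Concretely, starting from \eqref{11} we get
\[
\iint \frac{|u(x)-u(y)|}{|x-y|}\, d\pi(x,y) \le C \iint \left( M|\grad u|(x) + M|\grad u|(y)\right) d\pi(x,y),
\]
and then the marginal identity \eqref{2} (applied with $\zeta_1 = M|\grad u|$, $\zeta_2 = M|\grad u|$) collapses the right-hand side to
\[
C \int M|\grad u| \,(\eta_+ + \eta_-)\, dx = C \int M|\grad u| \, |\eta|\, dx.
\]
Here one must first check the integrability needed to justify \eqref{2}: since $\grad u\in L^p$, the maximal function bound \eqref{10} gives $M|\grad u|\in L^p$, so $M|\grad u|\,|\eta|\in L^1$ by H\"older because $\eta\in L^q$; this also legitimizes plugging $\zeta_1=\zeta_2=M|\grad u|$ into \eqref{2}.

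The final step is simply H\"older's inequality,
\[
\int M|\grad u|\, |\eta|\, dx \le \|M|\grad u|\|_{L^p}\|\eta\|_{L^q} \le C\|\grad u\|_{L^p}\|\eta\|_{L^q},
\]
using \eqref{10} in the last inequality, which yields the claimed bound with a universal constant. The case $p=\infty$, $q=1$ is included: then $M|\grad u|\in L^\infty$ with $\|M|\grad u|\|_{L^\infty}\le\|\grad u\|_{L^\infty}$ and $\eta\in L^1$, so the same chain works verbatim.

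The main obstacle — really the only subtlety — is that \eqref{11} is only asserted for almost every pair $(x,y)\in\R^d\times\R^d$ with respect to Lebesgue measure, whereas we need it $\pi$-almost everywhere, and $\pi$ need not be absolutely continuous with respect to the product measure. This is where the hypothesis $\eta\in L^1\cap L^q$ (equivalently, the marginals being absolutely continuous) is used: one should argue that the exceptional null set for \eqref{11} can be taken of product form, or approximate, so that the bound survives integration against any $\pi$ whose marginals are absolutely continuous. I would handle this by noting the estimate \eqref{11} in fact holds for every $x,y$ outside a set of the form $(N\times\R^d)\cup(\R^d\times N)$ with $N$ Lebesgue-null (the Morrey estimate holds at every Lebesgue point of $\grad u$ in both variables), and since $\pi[N\times\R^d]=\int_N\eta_+\,dx=0$ and likewise for the other marginal, this exceptional set is $\pi$-null. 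Everything else is a routine application of the maximal function estimates quoted just before the statement.
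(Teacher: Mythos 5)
Your proof is correct and follows essentially the same route as the paper: the Morrey-type estimate \eqref{11} to pass to maximal functions, the marginal condition \eqref{2} to collapse the double integral, H\"older's inequality, and the $L^p$-boundedness \eqref{10} of the maximal operator. The extra measure-theoretic remark you add---that the exceptional null set for \eqref{11} is of product form and hence $\pi$-null since the marginals $\eta_\pm\,dx$ are absolutely continuous---is a valid clarification of a point the paper leaves implicit.
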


\begin{proof}
The statement is trivial if $p=\infty$, and we thus restrict to the case $p\in (1,\infty)$.

An application of the Morrey-type estimate \eqref{11} yields that
\[
\iint \frac{|u(x)-u(y)|}{|x-y|}\, d\pi(x,y) \le C\iint M |\grad u |(x) + M|\grad u|(y)\, d\pi(x,y).
\]
In the integrand on the right-hand side, the terms involving $x$ and $y$ are now separated. We can thus 
apply the marginal condition \eqref{2}, use $|\eta| = \eta_++\eta_-$, and obtain via H\"older's inequality
\[
\iint \frac{|u(x)-u(y)|}{|x-y|}\, d\pi(x,y) \le C \int M|\grad u|( \eta_+ + \eta_-)\, dx \le C \|\eta\|_{L^{q}} \|M|\grad u|\|_{L^p}.
\]
We deduce the statement of the lemma with the help of  \eqref{10}.
\end{proof}

\label{page1}
In the case where $u\in W^{1,1}(\R^d)$, the above argumentation breaks down, because the maximal function operator ceases to be continuous on $L^1$, cf.\ \eqref{10}. A way to overcome this difficulty was suggested by Jabin in \cite{Jabin10}: Because $\grad u$ belongs to $ L^1(\R^d)$, by the Dunford--Pettis theorem, there exists a nonnegative  continuous function $e$ on $\R_+$ with
\begin{equation}
\label{17}
\frac{e(\xi)}{\xi}\mbox{ increasing}, \quad \lim_{\xi\to \infty}\frac{e(\xi)}{\xi}=\infty,
\end{equation}
and
\begin{equation}
\label{18}
 \int e(|\grad u|)\, dx <\infty.
\end{equation}

With this function $e$ fixed, we can modify the result of the previous lemma as follows:

\begin{lemma}\label{L5}
 Let $\eta$ be a bounded mean-zero function in $L^1(\R^d)$ and let $\pi$ be a transport plan in $\Pi(\eta_+,\eta_-)$. Then there exists a universal constant $C>0$ such that for any function $u$ in $W^{1,1}(\R^d)$, there exists a continuous function $\psi$ depending only on $e$ and with $\psi(\xi)/|\log\xi|\to 0$ as $\xi\to 0$ such that
\[
\iint \frac{|u(x)-u(y)|}{\delta+ |x-y|}\, d\pi(x,y) \le C\psi(\delta) \left( \|\eta\|_{L^1} + \|\eta\|_{L^{\infty}} \|e(|\grad u|) \|_{L^1}\right).
\]
\end{lemma}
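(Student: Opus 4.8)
The plan is to mimic the maximal-function argument from Lemma \ref{L3}, but to replace the Hardy--Littlewood maximal function (which is unbounded on $L^1$) by a truncated maximal function localized to scales below $R=1$, say, and then to estimate its contribution using the superlinear function $e$ supplied by the Dunford--Pettis theorem. Concretely, for a parameter $\lambda>0$ to be chosen, I would split the transport plan $\pi$ into the region $\{|x-y|\le\lambda\}$ and the region $\{|x-y|>\lambda\}$. On the far region the difference quotient $|u(x)-u(y)|/(\delta+|x-y|)$ is controlled crudely by $|u(x)-u(y)|/\lambda\le (|u(x)|+|u(y)|)/\lambda$, and after applying the marginal condition \eqref{2} and $|\eta|=\eta_++\eta_-$ this yields a term of size $\lesssim \lambda^{-1}\|u\|_{L^1}\|\eta\|_{L^\infty}$; alternatively, one may simply note $|u(x)-u(y)|/(\delta+|x-y|)\le 1/\delta\cdot$(something) — but the cleaner route is to bound that region's cost directly by the total mass $\|\eta\|_{L^1}$ times $\log(\lambda/\delta+1)$ using the \emph{trivial} Lipschitz bound $|u(x)-u(y)|/(\delta+|x-y|)$ is not what we want there; rather I will absorb the far region into the $c_{\delta,R}$ cost estimate itself. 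Let me instead organize the near region as the main term.

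On the near region $\{|x-y|\le\lambda\}$, I would invoke the Morrey-type pointwise inequality \eqref{11} in its \emph{local} form: for $|x-y|\le\lambda$,
\[
\frac{|u(x)-u(y)|}{|x-y|}\le C\bigl(M_\lambda|\grad u|(x)+M_\lambda|\grad u|(y)\bigr),
\]
where $M_\lambda$ is the maximal function restricted to balls of radius at most $2\lambda$. The key gain is that $M_\lambda g\le \lambda^{d}\omega_d^{-1}|B_1|^{-1}$-type bounds are false, but one \emph{does} have the weak-type $(1,1)$ estimate $|\{M_\lambda g>t\}|\le C t^{-1}\|g\|_{L^1}$ uniformly in $\lambda$. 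Then, after separating variables via \eqref{2} and using $|\eta|=\eta_++\eta_-$ together with $\|\eta\|_{L^\infty}\le$ given, the near-region cost is bounded by $C\int M_\lambda|\grad u|\,|\eta|\,dx\le C\|\eta\|_{L^\infty}\int M_\lambda|\grad u|\,dx$ — but this last integral is again an $L^1$ norm of a maximal function, which diverges. The correct fix, following Jabin, is to truncate the difference quotient itself: one uses that $|u(x)-u(y)|/(\delta+|x-y|)\le C(M_\lambda|\grad u|(x)+M_\lambda|\grad u|(y))$ \emph{and} that this quantity is at most $C/\delta$ always, then estimates the truncated maximal function by the superlinear integrand $e(|\grad u|)$ via the distributional (layer-cake) identity, exploiting \eqref{17}: for the set where $M_\lambda|\grad u|$ is large, its measure is controlled by $t^{-1}\|\grad u\|_{L^1}$, and summing $\min\{t,1/\delta\}$ against this distribution, then passing the superlinearity of $e$ through, produces a bound $\int M_\lambda|\grad u|\,dx$-in-truncated-form $\le \psi(\delta)(\|\eta\|_{L^1}+\|e(|\grad u|)\|_{L^1})$ with $\psi(\delta)=o(|\log\delta|)$.

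The main obstacle, and the place where the argument genuinely departs from Lemma \ref{L3}, is making the sublogarithmic rate $\psi(\delta)/|\log\delta|\to0$ come out of the superlinearity condition \eqref{17}. The mechanism is: the naive bound on $\iint |u(x)-u(y)|/(\delta+|x-y|)\,d\pi$ using only $\grad u\in L^1$ and the weak-$(1,1)$ bound gives exactly $C|\log\delta|\|\grad u\|_{L^1}\|\eta\|_{L^\infty}$ (the critical growth mentioned in the introduction). To improve the $|\log\delta|$ to $o(|\log\delta|)$, I would write the layer-cake decomposition of $\int_{\{M_\lambda|\grad u|\le 1/\delta\}} M_\lambda|\grad u|\,dx$ and, on each dyadic level $\{2^k<M_\lambda|\grad u|\le 2^{k+1}\}$ with $2^k\lesssim 1/\delta$, replace the weak-$(1,1)$ measure bound $|\{M_\lambda|\grad u|>2^k\}|\le C2^{-k}\|\grad u\|_{L^1}$ by the sharper bound $|\{|\grad u|>\varepsilon_k 2^k\}|\le C(\varepsilon_k2^k)^{-1}\frac{2^k}{e(\varepsilon_k 2^k)/(\varepsilon_k 2^k)}\|e(|\grad u|)\|_{L^1}$-type estimate, i.e. one trades off: for scales where $|\grad u|$ is genuinely large, $e(|\grad u|)/|\grad u|$ is large so the measure is extra-small, and for the remaining bulk one picks up only the harmless $\|\eta\|_{L^1}$ term. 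Choosing the cutoff between "large gradient" and "bulk" to depend on $\delta$ in the right way, and using $\lim_{\xi\to\infty}e(\xi)/\xi=\infty$, produces a sum over the $O(|\log\delta|)$ dyadic levels whose total is $o(|\log\delta|)$. I would then set $\psi(\delta)$ to be precisely this sum (a continuous function of $\delta$ depending only on $e$), verify $\psi(\delta)/|\log\delta|\to0$, and combine with the far-region estimate — which contributes only $C\|\eta\|_{L^1}$ since there $|u(x)-u(y)|/(\delta+|x-y|)\le c_{\delta,\infty}$-type bounded cost times mass — to conclude. A clean way to package the far region is to note $\lambda=1$ suffices and $|u(x)-u(y)|/(\delta+|x-y|)\le 2$ crudely when $|x-y|\ge 1$ and $\delta\le 1$ is immaterial; I would absorb it into the $\|\eta\|_{L^1}$ term with an absolute constant.
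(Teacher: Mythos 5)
Your plan diverges from the paper's proof in an essential structural way. The paper never uses the maximal function here: it invokes Jabin's pointwise Riesz-kernel representation
\[
|u(x)-u(y)|\le C\int_{B(x,y)}\Bigl(\tfrac{1}{|x-z|^{d-1}}+\tfrac{1}{|y-z|^{d-1}}\Bigr)|\grad u(z)|\,dz ,
\]
and then, for a single free parameter $M>0$, splits the domain of the $z$-integral according to whether $|\grad u(z)|\le M$ or $|\grad u(z)|>M$. On the first set $\int_{B(x,y)}(|x-z|^{1-d}+|y-z|^{1-d})\,dz\lesssim|x-y|$ kills the $\delta+|x-y|$ in the denominator and gives $CM\|\eta\|_{L^1}$; on the second set, monotonicity of $\xi\mapsto e(\xi)/\xi$ lets one replace $|\grad u(z)|$ by $(M/e(M))\,e(|\grad u(z)|)$ pointwise, and the kernel is bounded by a term $\lesssim(|\log\delta|+1)$ after a Fubini and marginal step. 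Optimizing $M\mapsto M+(M/e(M))(|\log\delta|+1)$ gives $\psi(\delta)$, which is $o(|\log\delta|)$ precisely because $e(M)/M\to\infty$. This sidesteps the maximal operator entirely and handles small and large $|x-y|$ uniformly, with no near/far split.

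As written, your plan has two genuine gaps. First, the far region $\{|x-y|>\lambda\}$: the closing sentence claims $|u(x)-u(y)|/(\delta+|x-y|)\le 2$ there, which requires $\|u\|_{L^\infty}\le 1$ and is not available; the honest version would introduce $\|u\|_{L^1}\|\eta\|_{L^\infty}/\lambda$, a quantity that does not appear on the right-hand side of the lemma. Second, the core step that is supposed to upgrade $|\log\delta|$ to $o(|\log\delta|)$ -- the dyadic layer-cake with the claimed bound $|\{|\grad u|>\varepsilon_k 2^k\}|\le C(\varepsilon_k 2^k)^{-1}\frac{2^k}{e(\varepsilon_k 2^k)/(\varepsilon_k 2^k)}\|e(|\grad u|)\|_{L^1}$ -- is not correct as stated (Markov's inequality gives only $e(\varepsilon_k 2^k)^{-1}\|e(|\grad u|)\|_{L^1}$, without the extra factor $2^k$), and even repaired it controls the distribution of $|\grad u|$ rather than that of $M_\lambda|\grad u|$; bridging the two requires a Jensen-type inequality $e(M_\lambda g)\lesssim M_\lambda(e(g))$, which needs $e$ convex, an assumption you never invoke. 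You also correctly notice midway that your first attempt leads to an $L^1$ norm of a maximal function that diverges, but the ``fix'' you propose is left vague and doesn't clearly close. In short: the maximal-function route might be salvageable (with a convex choice of $e$ and a cleaner bookkeeping of the dyadic sum), but it is more fragile and more complicated than the paper's direct representation-plus-threshold argument, and the key estimates in your sketch are not yet a proof.
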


In Example \ref{E1} below, we will see that we cannot expect uniform bounds in $\delta$ if $u$ has only $BV$ regularity.

In our proof of Lemma \ref{L5}, we essentially imitate Jabin's \cite{Jabin10} modification of Crippa's and De Lellis's estimate \eqref{16e}.

\begin{proof}
We denote by $B(x,y)$ the ball of radius $|x-y|/2$ and center $(x+y)/2$. With the help of the elementary inequality,
\[
|u(x)-u(y)|\le C \int_{B(x,y)} \left(\frac1{|x-z|^{d-1}} +\frac1{|y-z|^{d-1}}\right)|\grad u(z)|\, dz,
\]
cf.\ \cite[Lemma 3.1]{Jabin10}, we estimate
\begin{eqnarray*}
\lefteqn{\iint \frac{|u(x)-u(y)|}{\delta+|x-y|} \, d\pi}\\
& \le& C \iint \int_{B(x,y)} \left(\frac1{|x-z|^{d-1}} +\frac1{|y-z|^{d-1}}\right)\frac{|\grad u(z)|}{\delta +|x-y|}\, dzd\pi.
\end{eqnarray*}
We let $M>0$ be an arbitrary constant and denote by $B_M(x,y)$ the subset of $B(x,y)$ in which $|\grad u|$ is bounded by $M$. As usual, the complementary set will be denoted by $B_M(x,y)^c$. Let $e$ be a nonnegative  continuous function with \eqref{17} and \eqref{18}.
On the one hand, because
\[
\int_{B(x,y)}  \left(\frac1{|x-z|^{d-1}} +\frac1{|y-z|^{d-1}}\right) \, dz \le C |x-y|,
\]
we have the estimate
\begin{eqnarray*}
\lefteqn{\iint \int_{B_M(x,y)}  \left(\frac1{|x-z|^{d-1}} +\frac1{|y-z|^{d-1}}\right)\frac{|\grad u(z)|}{\delta+|x-y|}\, dzd\pi}\\
&\le & M \iint \int_{B(x,y)}  \left(\frac1{|x-z|^{d-1}} +\frac1{|y-z|^{d-1}}\right)\frac{1}{\delta+|x-y|}\, dzd\pi\\
&\le &CM \|\eta\|_{L^1}.
\end{eqnarray*}
On the other hand, because $\xi\mapsto e(\xi)/\xi$ is increasing, it holds
\begin{eqnarray*}
\lefteqn{\iint \int_{B_M(x,y)^c}  \left(\frac1{|x-z|^{d-1}} +\frac1{|y-z|^{d-1}}\right)\frac{|\grad u(z)|}{\delta+|x-y|}\, dzd\pi}\\
&\le& \frac{M}{e(M)} \iint \int_{B(x,y)}  \left(\frac1{|x-z|^{d-1}} +\frac1{|y-z|^{d-1}}\right)\frac{e(|\grad u(z)|)}{\delta+|x-y|}\, dzd\pi.
\end{eqnarray*}
The integrand increases if we replace $|x-y|$ by $|x-z|$ or $|y-z|$ in the denominator and extend the inner integral over all of $\R^d$. We thus achieve that the integrand splits into a term depending only on $x$ and one depending only on $y$. Invoking the marginal condition \eqref{2} and using Fubini, we thus arrive at
\begin{eqnarray*}
\lefteqn{\iint \int_{B_M(x,y)}  \left(\frac1{|x-z|^{d-1}} +\frac1{|y-z|^{d-1}}\right)\frac{|\grad u(z)|}{\delta+|x-y|}\, dzd\pi}\\
&\le& \frac{M}{e(M)} \int e(|\grad u(z)|) \int \frac{|\eta(x)|}{|x-z|^{d-1}(\delta+|x-z|)}\, dx dz\\
&\le &C \frac{M}{e(M)}\left(|\log \delta| +1\right)\|\eta\|_{L^{\infty}} \| e(|\grad u|)\|_{L^1}.
\end{eqnarray*}
Combining the estimates on $B_M(x,y)$ and $B_M(x,y)^c$ and optimizing in $M $ yields the desired result with
\[
\psi(\delta) :=\inf_{M>0} \left(M + \frac{M}{e(M)} \left(|\log\delta|+1\right)\right).
\]
\end{proof}

\begin{ex}\label{E1}
The following construction shows that we cannot expect that Lemma \ref{L3} extends to the $BV$ case. In fact, we prove that there exists a vector field $u$ with $| u|_{BV}\sim 1$ and a mean zero function $\eta$ such that
\begin{equation}
\label{18a}
\iint \frac{|u(x)-u(y)|}{\delta+ |x-y|}\, d\pi(x,y)\sim |\log\delta| 
\end{equation}
as $\delta \ll 1$. For convenience, we consider the periodic one-dimensional setting.  For $x\in[0,1)$, we set
\[
\eta(x) = \begin{cases}  1 & \mbox{if }x\in\left[0,\frac12\right) \\ -1 & \mbox{if }x\in \left[\frac12,1\right), \end{cases}
\]
and extend $\eta$ periodically. Then the optimal transport plan $\pi_{\opt}$ is of the form
\[
\pi_{\opt} = (\id\times T)_{\#} \eta_+,
\]
i.e., the push-forward of $\eta_+$ by the map $\id\times T$, where $T$ is the optimal transport map given by
\[
T(x) = \begin{cases}   - x &\mbox{if } x \in\left(0 , \frac14\right)\\
1 - x &\mbox{if } x \in\left(\frac14,\frac12\right),
\end{cases}
\]
and extended periodically. If $u=\eta$, then $| u|_{BV}\sim 1$ and $u(x)-u(y) = 2 $ for $d\pi_{\opt}$-almost all $(x,y)$. Thus, if we denote by $|\tacka|_{\per}$ the periodic distance on the periodic interval $[0,1)_{\per}$, we have
\begin{eqnarray*}
\iint \frac{|u(x)-u(y)|}{\delta+|x-y|_{\per}} \, d\pi_{\opt}(x,y) &=& \int_{[0,1)_{\per}} \frac{2}{\delta +|x-T(x)|_{\per}} \eta_+(x)\, dx\\
&=& \int_0^{1/4} \frac2{\delta + 2x}\, dx + \int_{1/4}^{1/2} \frac2{\delta + 1-2x}\, dx\\
&=& 2 \log\left(\frac1{2\delta} +1\right).
\end{eqnarray*}
This proves \eqref{18a} if $\delta\ll1$.
\end{ex}

We are now in the position to proof Proposition \ref{P1}:

\begin{proof}[Proof of Proposition  \ref{P1}]
Notice that $\eta$ is constructed in such a way that its mean is zero for almost all times. Indeed, by an approximation argument, we verify that 
\[
\int \rho_i\, dx = \int \bar \rho_i\, dx + \int_0^t\int f_i\,dxds
\]
for $i=1,2$. We may change $\eta$ on a set of Lebesgue measure zero to achieve that its mean is constantly zero in time. Notice also that $\eta $ vanishes initially. The continuity equation satisfied by $\eta$ is of the form
\[
\partial_t \eta  + \div j = 0,
\]
where
\[
j := u_1 \eta + (u_1-u_2)\rho_2 + u_1(\bar \rho_1-\bar\rho_2) + u_1\int_0^t (f_1-f_2)\, ds
\]
is a function in $ L^1((0,T); L^1(\R^d))$. We infer thus from Lemma \ref{L1} that the function $t\mapsto \D_{\delta,R}(\eta(t,\tacka))$ is weakly differentiable with derivative
\[
\frac{d}{dt} \D_{\delta,R}(\eta) = \int j\cdot\grad \varphi_{\opt}\, dx.
\]
We remark that $\D_{\delta,R}(\eta(t,\tacka))\to 0$ as $t\to0$, which follows from the facts that solutions to the continuity equation \eqref{23} approach their initial value weakly in $L^1$ and   Kantorovich--Rubinstein distances metrize weak convergence (cf.\ \cite[Theorem 7.12]{Villani03}). 
Integration in time thus yields
\begin{eqnarray}
\| \D_{\delta,R}(\eta)\|_{L^{\infty}} &\le & \int_0^T\left|\int u_1\cdot \grad\varphi_{\opt}\eta \, dx \right| dt + \int_0^T \int |u_1-u_2|| \grad\varphi_{\opt} ||\rho_2|\, dxdt\nonumber\\
&&\mbox{}+ \int_0^T\int |u_1|| \grad \varphi_{\opt}| \left(|\bar \rho_1-\bar\rho_2|+ \int_0^t |f_1-f_2|\, ds\right)dxdt\label{30}.
\end{eqnarray}
The first term on the right-hand side is estimated via Lemmas \ref{L2}--\ref{L5} to the effect that
\[
\int_0^T\left|\int u_1\cdot\grad\varphi_{\opt}\eta\, dx\right|dt\le C_1 \psi_p(\delta) ,
\]
where $\psi_p$ and $C_1$ are  as in the statement of the proposition.
The remaining terms on the right-hand side of \eqref{30} are controlled by
\begin{eqnarray*}
\lefteqn{\|\grad\varphi_{\opt}\|_{L^{\infty}} \left(\|u_1-u_2\|_{L^1(L^p)} \|\rho_2\|_{L^{\infty}(L^q)}  + \|u_1\|_{L^1(L^p)} \|\bar\rho_1-\bar\rho_2\|_{L^q}\right.}\\
&& \left. + \|u_1\|_{L^1(L^p)} \| f_1-f_2\|_{L^1(L^q)}\right).\hspace{10em}
\end{eqnarray*}
Invoking \eqref{12} yields the desired estimate.

\end{proof}

To prove Theorem \ref{T1}, we need an additional estimate. For that purpose we define for $R>0$,
\[
\D_R(\eta) : =  \inf_{\pi\in\Pi(\eta_+,\eta_-)} \iint \min\{|x-y|,R\}\, d\pi(x,y).
\]
It is clear that $\D_R(\eta)=0$ if and only if $\eta=0$.

We have:
\
\begin{lemma}\label{L4}
Let $\eta$ be a mean zero  function in $L^1(\R^d)$. Then for any positive $\eps,\, \delta$, and $R$,
\[
\D_{R}(\eta) \le \delta \exp\left(\frac{\D_{\delta,R}(\eta)}{\eps}\right)\|\eta\|_{L^{1}} + \eps R  + R \log^{-1}\left(\frac{R}{\delta}+1\right)\D_{\delta,R}(\eta).
\]
In particular, if 
there exists a continuous function $\psi$ on $\R_+$ with $\psi(\xi)/|\log\xi|\to 0$ as $\xi \to 0$, and
\[
\sup_{\delta, R>0}\frac{ \D_{\delta,R}(\eta)}{\psi(\delta)}<\infty,
\]
then $\eta = 0$.
\end{lemma}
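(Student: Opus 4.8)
The plan is to prove the displayed inequality first and then derive the ``in particular'' statement as a corollary. Fix $\eps,\delta,R>0$ and let $\pi_{\opt}$ be an optimal plan for $\D_{\delta,R}(\eta)$. We split the integral defining $\D_R(\eta)\le\iint\min\{|x-y|,R\}\,d\pi_{\opt}$ according to whether $|x-y|$ is small or large, measured against the threshold where the logarithmic cost $c_{\delta,R}$ reaches a prescribed level. Concretely, set $\lambda:=\D_{\delta,R}(\eta)/\eps$ and split into the region $A:=\{c_{\delta,R}(|x-y|)\le\lambda\}$ and its complement. On $A$ with additionally $|x-y|\le R$ we have $|x-y|\le\delta(e^{\lambda}-1)\le\delta e^{\lambda}$, so this piece contributes at most $\delta e^{\lambda}\,\pi_{\opt}[A]\le\delta e^{\D_{\delta,R}(\eta)/\eps}\|\eta\|_{L^1}$ (using that the total mass of $\pi_{\opt}$ equals $\|\eta_+\|_{L^1}\le\|\eta\|_{L^1}$); on $A$ with $|x-y|>R$, we simply bound $\min\{|x-y|,R\}=R$ and observe that on this sub-region $c_{\delta,R}(|x-y|)\ge\log(R\delta^{-1}+1)$, hence by Chebyshev the mass there is at most $\eps/\D_{\delta,R}(\eta)\cdot(\text{something})$ — more cleanly, one bounds this contribution by $R\log^{-1}(R\delta^{-1}+1)\D_{\delta,R}(\eta)$ by writing $R=R\,c_{\delta,R}(|x-y|)/\log(R\delta^{-1}+1)$ there and integrating. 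Finally, on $A^c$ we use $\min\{|x-y|,R\}\le R$ and Chebyshev's inequality: $\pi_{\opt}[A^c]\le\lambda^{-1}\iint c_{\delta,R}\,d\pi_{\opt}=\lambda^{-1}\D_{\delta,R}(\eta)=\eps$, contributing at most $\eps R$. Summing the three pieces gives exactly the claimed inequality.

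For the ``in particular'' part, suppose $\psi$ is continuous with $\psi(\xi)/|\log\xi|\to0$ and $K:=\sup_{\delta,R}\D_{\delta,R}(\eta)/\psi(\delta)<\infty$. Fix $R>0$ and let $\delta\to0$ along with a suitable choice of $\eps$ depending on $\delta$. In the bound
\[
\D_R(\eta)\le\delta\exp\!\Big(\frac{K\psi(\delta)}{\eps}\Big)\|\eta\|_{L^1}+\eps R+\frac{R\,K\psi(\delta)}{\log(R\delta^{-1}+1)},
\]
the third term tends to $0$ as $\delta\to0$ because $\psi(\delta)/|\log\delta|\to0$ and $\log(R\delta^{-1}+1)\sim|\log\delta|$. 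For the first two terms I would choose $\eps=\eps(\delta)\to0$ slowly enough that $\psi(\delta)/\eps(\delta)\to0$ — possible since $\psi(\delta)\to0$ (note $\psi(\delta)/|\log\delta|\to0$ forces $\psi(\delta)\to0$ as $\delta\to0$, since $|\log\delta|\to\infty$); then $\exp(K\psi(\delta)/\eps(\delta))\to1$, so the first term behaves like $\delta\|\eta\|_{L^1}\to0$, and the second term $\eps(\delta)R\to0$. Hence $\D_R(\eta)=0$ for every $R>0$, which forces $\eta=0$ as noted right before the lemma.

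The main obstacle is bookkeeping in the three-way split: one must choose the cutoff level in the $c_{\delta,R}$ scale (namely $\lambda=\D_{\delta,R}(\eta)/\eps$) so that the ``large $|x-y|$, small cost'' region is handled by the logarithmic-gain factor $\log^{-1}(R\delta^{-1}+1)$ rather than by Chebyshev, while the genuinely large-cost region $A^c$ is handled by Chebyshev to produce the clean $\eps R$ term. Getting these three contributions to land on precisely the three summands in the statement (as opposed to a messier but equivalent bound) requires care in how one rewrites $\min\{|x-y|,R\}$ and $R$ in terms of $c_{\delta,R}$ on each piece. The asymptotic argument in the second part is then routine, the only subtlety being the order of limits: one fixes $R$, sends $\delta\to0$ with $\eps=\eps(\delta)$ chosen so that $\psi(\delta)/\eps(\delta)\to0$ but $\eps(\delta)\to0$, and concludes $\D_R(\eta)=0$ for each fixed $R$.
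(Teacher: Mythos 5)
Your proof of the displayed inequality is correct and follows essentially the same route as the paper: split into three pieces according to whether the cost $c_{\delta,R}(|x-y|)$ is below or above the Chebyshev threshold $\D_{\delta,R}(\eta)/\eps$ and whether $|x-y|\lessgtr R$, then estimate each piece by the corresponding summand. (Your decomposition allocates the large-cost, large-distance region to the Chebyshev piece rather than to the $\log^{-1}$ piece as the paper does, but both are disjoint coverings and yield the same three summands.)

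The ``in particular'' part, however, has a genuine error. Your parenthetical ``$\psi(\delta)/|\log\delta|\to0$ forces $\psi(\delta)\to0$ as $\delta\to0$'' is false: $a(\delta)/b(\delta)\to 0$ with $b(\delta)\to\infty$ does not imply $a(\delta)\to0$. Indeed $\psi(\delta)=\sqrt{|\log\delta|}$ satisfies the hypothesis and tends to infinity, and so does the $\psi_1$ arising from Lemma~\ref{L5} in the $p=1$ case; even the constant $\psi\equiv 1$ of the $p>1$ case does not tend to $0$. Once this is seen, your proposed choice of $\eps(\delta)$ is impossible: you ask for $\eps(\delta)\to0$ \emph{and} $\psi(\delta)/\eps(\delta)\to0$, which cannot both hold unless $\psi(\delta)\to0$. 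Consequently the claim $\exp(K\psi(\delta)/\eps(\delta))\to1$ fails. What one actually needs is only the weaker $\delta\exp(K\psi(\delta)/\eps(\delta))\to0$, which does hold for a suitable diagonal choice such as $\eps(\delta)=\sqrt{\psi(\delta)/|\log\delta|}$; but the cleanest repair is the paper's iterated limit: fix $\eps>0$, send $\delta\to0$ (the first term decays since $K\psi(\delta)/\eps=o(|\log\delta|)$, and the third term decays since $\psi(\delta)/|\log\delta|\to0$), conclude $\D_R(\eta)\le\eps R$, and then let $\eps\to0$.
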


\begin{proof}
We write  $\kappa: =\D_{\delta,R}(\eta)$ for abbreviation. Let $D_R$ be the set of all points $(x,y) $ in $ \R^d\times \R^d$ whose distance is at most $R$. We moreover define 
$K$ as the subset of $D_R $ where $c_{\delta,R}(|x-y|) \le \eps^{-1}\kappa$ for all $(x,y)$. Then, denoting by $K^c$ the complement set of $K$ in $D_R$, it follows that the optimal transport plan $\pi_{\opt}$  satisfies the bound
\[
\pi_{\opt}[K^c] \le \frac{\eps}{\kappa} \D_{\delta,R}(\eta) = \eps.
\]
On the one hand, by the definition of $K$ and because $c_{\delta,R}^{-1}(\xi) = \delta(\exp(\xi)-1)$ for $\xi \le \log(\delta^{-1}R+1)$, we have that
\[
\iint_K |x-y|\, d\pi_{\opt} \le\delta \exp\left(\frac{\kappa}{\eps}\right)  \pi_{\opt}[K] \le \delta \exp\left(\frac{\kappa}{\eps}\right)\|\eta\|_{L^{1}} .
\]
On the other hand, 
\[
\iint_{K^c} |x-y|\, d\pi_{\opt} \le R \pi_{\opt}[K^c] \le \eps R.
\]
Finally, away from the diagonal we have
\[
\iint_{D_R^c} \, d\pi_{\opt} \le \frac{\kappa}{c_{\delta,R}(R)} = \frac{\kappa}{ \log\left(\frac{R}{\delta} +1\right)}.
\]
Combining the previous estimates and optimizing over all $\pi \in \Pi(\eta_+,\eta_-)$ yields
\[
\D_{R}(\eta) \le \delta \exp\left(\frac{\kappa}{\eps}\right)\|\eta\|_{L^{1}}   + \eps R +\frac{R\kappa}{ \log\left(\frac{R}{\delta} +1\right)},
\]
which is the first statement of the lemma.

For the second statement, we let first $\delta\to0$ and then $\eps\to 0$ and  find $\D_{R}(\eta)=0$. Thus $\eta=0$.
\end{proof}

It remains to establish our main results.

\begin{proof}[Proof of Theorem \ref{T1}]
Given  two solutions $\rho_1$ and $\rho_2$  of the Cauchy problem \eqref{23}, we consider their difference $\eta := \rho_1-\rho_2$. Then $\eta$ satisfies the homogeneous equation with zero initial datum. Applying Proposition \ref{P1}, we then obtain for any positive $\delta$ and $R$ that
\[
\|\D_{\delta,R}(\eta) \|_{L^{\infty}}\le C\psi_p(\delta),
\]
where $\psi_p=1$ if $p>1$ and $\psi_1$ continuous with $\psi_1(\delta)/|\log\delta|\to0$ as $\delta\to0$. In particular, from Lemma \ref{L4} we infer that $\eta=0$. This shows uniqueness.
\end{proof}

\begin{proof}[Proof of Theorem \ref{T2}]
%
%
%
%

Boundedness of  $\eta$ in $L^{\infty}(L^1\cap L^q)$ is an immediate consequence of the assumptions on the data. We focus thus 
on the stability estimate.

Thanks to Proposition \ref{P1}, there exists a constant $C$ with the desired properties  such that for any positive $\delta$ and $R$ we have
\[
\|\D_{\delta, R}(\eta)\|_{L^{\infty}} \le C\left(1+\frac{r}{\delta}\right).
\]
From here on, we will drop the constants in the displayed formulas. From Lemma \ref{L4} with $R=1$ and $\delta = r$ we then obtain that
\[
\|\D_{1}(\eta)\|_{L^{\infty}} \lesssim   r \exp\left(\frac{1}{\eps}\right) + \eps + \frac1{\log\left(\frac1{r}+1\right)}
\]
for any $\eps>0$. We choose $\eps = |\log \sqrt{r} |^{-1}$, to the effect of
\[
\|\D_1(\eta) \|_{L^{\infty}} \lesssim  \sqrt{r} + \frac1{\log\left(\frac1{r}\right)} + \frac1{\log\left(\frac1{r} +1\right)}.
\]
Because $r\ll1$, the right-hand side is of order $|\log r|^{-1}$. It remains thus to observe that
\[
\D_1(\eta) \sim \|\eta\|_{W^{-1,1}},
\]
which follows from the Kantorovich--Rubinstein duality formula  \eqref{3}, because $d(x,y)$ $ := \min\{|x-y|,1\}$ defines a metric on $\R^d$.
\end{proof}

\section*{Acknowledgements}

The author thanks L.\ Ambrosio for bringing references \cite{Loeper06b} and \cite{Loeper06a} to his attention. He furthermore thanks C.\ De Lellis for mentioning the counterexample for strong stability results (Example \ref{example}). 

\bibliography{coarsening.bib}
\bibliographystyle{abbrv}
\end{document}